\def\leq{\leqslant}
\def\geq{\geqslant}
\def\le{\leqslant}
\def\ge{\geqslant}
\newenvironment{myproof}[1][\proofname]{%
  \begin{proof}[#1]$ $\par\nobreak\ignorespaces
}{%
  \end{proof}
}
\newtheorem{thm}{Theorem}[section]
\newtheorem*{thm*}{Theorem}
\newtheorem{lem}[thm]{Lemma}
\theoremstyle{definition}
\theoremstyle{remark}
\def\G{{\mathscr G}}
\def\M{{\mathcal M}}
\def\Bb{{\mathbb B}}
\def\Eb{{\mathbb E}}
\def\Nb{{\mathbb N}}
\def\Pb{{\mathbb P}}
\def\Qb{{\mathbb Q}}
\def\Rb{{\mathbb R}}
\def\Zb{{\mathbb Z}}
\def\A{{\mathcal A}}
\def\D {{\mathcal D}}
\def\E {{\mathcal E}}
\def\F {{\mathcal F}}
\def\G {{\mathcal G}}
\def\J {{\mathcal J}}
\def\M {{\mathcal M}}
\def\be{\beta}
\def\de{\delta}
\def\De{\Delta}
\def\ga{\gamma}
\def\Ga{\Gamma}
\def\Om{\Omega}
\def\om{\omega}
\def\ep {\varepsilon}
\def\phi{\varphi}
\def\si{\sigma}
\def\th{\theta}
\def\et{\eta}
\def\ze{\zeta}
\def\ti{\times}
\def\bar{\overline}
\renewcommand{\proofname}{Proof}
\def\bar{\overline}
\def\leq{\leqslant}
\def\geq{\geqslant}
\def \ind {\hbox{ 1\hskip -3pt I}}
\begin{document}
\title{Monotonicity and regularity of the speed for excited 
 random walks in higher dimensions}         
\author {Cong-Dan Pham\\Aix Marseille Universit$\acute{\text{e}}$, CNRS, Centrale Marseille\\ LATP, UMR 7353, 13453 Marseille France\\cong-dan.pham@univ-amu.fr}
\maketitle
\begin{abstract}
We introduce a method for studying monotonicity of the speed of excited random walks in high dimensions, based
on a formula for the speed obtained via cut-times and Girsanov's transform. While the method gives rise to similar results as have been or can be obtained
via the expansion method of van der Hofstad and Holmes, it may be more
palatable to a general probabilistic audience. We also revisit the law of large numbers for
stationary cookie environments. In particular, we introduce a new notion of
$e_1-$exchangeable cookie environment and prove the law of large numbers for
this case.
\end{abstract}
\section{Introduction}

\subsection{Excited random walk with random cookies (ERWRC)} 

Excited random walks (ERW) were introduced in \cite{BW03} by I. Benjamini and D. Wilson. After that, M. Zerner generalized ERW when he introduced in \cite{Zer05}, \cite{Zer06} cookie random walks, which are also called multi-excited random walks. In our paper, we consider a model of random walk called the excited random walk with $m$ random cookies  which we denote by ERWRC or  $m$-ERWRC when more explicitly needed. This is a generalisation of multi-excited random walk and  also a particular case of the excited random walk in random environment introduced and considered in \cite{MPRV12} and \cite{KZ14}.

Let us describe $m-$ERWRC.  Let $m$ be a positive integer or $m=+\infty$. We place $m$ cookies on 
every site of the lattice $\Zb^{d}.$ Moreover,  $m$ random variables 
$(\be_k(y))_{1\leq k\leq m}$ with values in $[-1,1]$ are attached to each site $y$ of $\Zb^{d}$. The process
$ \be:=\{(\be_k(y))_{1\leq k\leq m}\}_{y\in\Zb^{d}}$ serves as a random environment whose law 
is denoted by $\Qb$. Let $\Bb:=([-1,1]^{m})^{\Zb^{d}}$ be the set of random environments. The excited random walk with $m$ cookies 
$ \be=\{(\be_k(y))_{1\leq k\leq m}\}_{y\in\Zb^{d}}$ 
is a discrete time nearest neighbor random walk $(Y_n)_{n\ge 0}$ on the lattice $\Zb^d$ obeying the following rule: when the walk visits $y$ for the $k$-th time, $1 \le k \le m$, then it eats one cookie 
and jumps with probability $(1+\be_{k}(y))/2d$ to the right, probability $(1-\be_{k}(y))/2d$ to the left, 
and probability $1/(2d)$ to the other nearest neighbor sites. On the other hand, when the walk is at a site $y$ 
where there is no more cookie, then it jumps uniformly at random with probability $1/(2d)$ to one of the $2d$ neighboring sites.
When $m=1$ and the environment $\be$ is constant, we recover the excited random walk.

Throughout this paper, we denote by $\{Y_n\notin^k\}$ the event that $Y_n$ has been visited fewer than $k$ times before time $n$ and denote by $\{Y_n\in^k\}$ the complement  of $\{Y_n\notin^k\}.$ When $k=1$ we also use the notations $\{Y_n\notin\}:=\{Y_n\notin^1\}$ and $\{Y_n\in\}:=\{Y_n\in^1\}.$ Moreover, the event that $Y_n$ has been exactly visited $k-1$ times before time $n$ is denoted by $\{Y_n\notin_k\}$ and its complement is denoted by $\{Y_n\in_k\}$.

From the description of $m-$ERWRC, when $\be$ is fixed, the ``quenched" law $\Pb_{\be}$ of excited random walk with $m$ random cookies $ \be$ is the probability on the path space $(\Zb^d)^\Nb$, defined by: 
\begin{itemize}
\item $\Pb_{\be}(Y_0=0)=1$,
\item $\Pb_{\be}[Y_{n+1}-Y_n=\pm e_i|Y_0,..., Y_n]=\frac{1}{2d}$ for $2\leq i\leq d$,
\item if $Y_n$ has been visited exactly $k-1$ times before time $n$, i.e. on the event $\{Y_n\notin_k\}$
$$
\Pb_{\be}[Y_{n+1}-Y_n=\pm e_1|Y_0,..., Y_n]=
\left\{ \begin{array}{ll}
\frac{1\pm\be_{k}(Y_n)}{2d} & \mbox{ for } 1 \le k \le m, \\
 \frac{1}{2d} &  \mbox{ for } k > m.
\end{array}
\right.
$$
\end{itemize}
The ``annealed" law $P$ is  then defined as the semi-direct product on $\Bb\times(\Zb^d)^\Nb$: $P=\Qb\otimes\Pb_{\be}.$
We say that the cookies are ``identical" if 
\begin{gather*}\tag{IDEN}\label{IDEN}
\forall k \mbox{ such that } 1\leq k\leq m \, , \, \, 
\forall y \in \Zb^{d} \, , \, \, \be_k(y) = \be(y) \,.
\end{gather*} 

In this model, the random cookie environment $\be=\{\be(y)\}_{y\in\Zb^{d}}$ is assumed to be:
\begin{itemize}
\item  stationary: $\be(y+\cdot)\overset{law}{=}\be$ for any $y$ in ${\Zb^{d}}$,
\item  $e_1$-exchangeable: to define this notion,  we consider a family  
$\De=\{\de_z\}_{z\in\Zb^{d-1}}$ of bijective mappings from $\Zb$ to $\Zb$. The mapping
$\si_{\De}: \Zb^d \to \Zb^d$ defined by $\si_{\De}(x,z)=(\de_z(x),z)$ for all $x\in\Zb, z\in\Zb^{d-1}$, is then 
a bijection from $\Zb^d$ to $\Zb^d$, acting on the set $\Bb$ of environments by 
$\si_{\De}(\be)(y)=\be(\si_{\De}(y))$. The environment is said to be $e_1$-exchangeable if and only if
$\si_{\De}(\be)  \overset{law}{=}\be$ for any family $\De$. In other words, an environment is $e_1$-exchangeable
if its law does not change when performing permutations of the environment on each
horizontal line. 
\end{itemize}

An i.i.d. cookie environment is of course stationary and $e_1$-exchangeable. Another simple example is provided by
a stationary environment not depending on the horizontal component: for all $y=(x,z) \in \Zb \times \Zb^{d-1}$,
$\beta(y)=\beta(z)$, where $(\beta(z))_{z \in \Zb^{d-1}}$ is stationary. 

To describe our main result about this model, we introduce a partial ordering on the laws of environments. Generally 
speaking, let $Q_1, Q_{2}$ be two probability measures on a partially ordered set $(E,\leq)$. We say that 
a  probability measure $Q$ on $E \times E$  is a monotone coupling
of  $Q_1$ and  $Q_2$, if when denoting by  $l_1$ and $l_2$  the coordinate 
maps from $E \times E$ to $E$:
$$ \mbox{ for } i=1,2 \, ,  \, \text{ for all } B \mbox{ events of } E \, , \, \, Q(l_i \in B)=Q_i(B) \, 
\mbox{ and } Q(l_1 \leq l_2) =1 \, .
$$
When such a monotone coupling exist, we say that $Q_1 \prec Q_2$. 
  
The set $\Bb$ of environment is provided with the partial ordering:
$$ \beta_1 \leq \be_2 \mbox{ if and only if } \beta_{1,k}(y) \leq \beta_{2,k}(y) \, , \, \, 1 \leq k \leq m
	\, , \, \,  y \in \Zb^d \, .
$$
Let $(Z_n)_{n\geq 0}$ (resp. $(X_n)_{n\geq 0}$) be the vertical (resp. horizontal) component of $m-$ERWRC $(Y_n)_{n\geq 0}$:
 $$Z_n:=(Y_n\cdot e_2,...,Y_n\cdot e_d) \, , \, \,  X_n:=Y_n\cdot e_1 \, . 
 $$
Then $(Z_n)_{n \ge 0}$ is a simple random walk on $\Zb^{d-1}$. We can extend this simple random walk to times integer to obtain the simple random walk $(Z_n)_{n \in\Zb}$ (see \eqref{lienZ-tildeZ} in Section \ref{contructES}). For $d-1\geq 5$,  E. Bolthausen, A-S. Sznitman and O. Zeitouni \cite{BSZ03}  proved the existence of cut times, i.e. times splitting 
the trajectory into two non-intersecting paths. Moreover, these cut times are integrable for $d-1\geq 5$. Let $\D$ be the 
set of cut times, write $\D=\{...<T_{-2}<T_{-1}<T_0\leq 0<T_1<T_2<...\}$. We denote $T:=T_1$ and $\hat{P}=P(\cdot|0\in\D)$.
 Our main result reads then  as follows:
\begin{thm} \label{Therwrc} Let $Y=(Y_n)$ be $m-$ERWRC, assume that the random cookie environment is stationary and $e_1$-exchangeable. We denote $X_n=Y_n\cdot e_1$ the projection of the random walk on the first coordinate. 
\begin{itemize}
 \item Law of large numbers:
 
 For $d\geq 6$,   $\frac{X_n}{n}$ converges $P-$a.s. to a random variable $V$, 
whose expectation under $\hat{P}$ is denoted by $v(\Qb)$ satisfying $v(\Qb)=\hat{E}[V]=\frac{\hat{E}(X_T)}{\hat{E}(T)}$. In the particular case that the cookie environment is $i.i.d.$ then $V$ is constant and $V=v(\Qb).$
\item Monotonicity: 

\begin{enumerate}
\item If the cookies are identical $i.e.$ $\forall y \in \Zb^d \, , \, \, \be_1(y)=\be_2(y)=...=\be_m(y)=\be(y) \,  
$ then there exists $d_0\in\Nb^{*}$  such that 
$v(\Qb)$  is increasing w.r.t. $\Qb$ for $d\geq d_0$ (w.r.t. the partial ordering $\prec$).
\item If the cookies are identical, there exists $\si_0\in(0,1)$ such that for any $d \geq 10$, 
$v(\Qb)$  is increasing w.r.t. $\Qb$ on the set 
$\{\Qb \mbox{ such that }   \Qb(0 \le |\be(y)| \le \si_0, \forall y \in \Zb^d)=1 \}$.
\end{enumerate}
\end{itemize}

%
%
\end{thm}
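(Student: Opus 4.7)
The plan is to build the proof on the cut-time decomposition of $(Y_n)$ together with a Girsanov-type change of measure for the horizontal increments, exploiting the fact that the vertical walk $(Z_n)$ is a simple random walk on $\Zb^{d-1}$ independent of the cookie environment $\be$.

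\textbf{Law of large numbers.} Since $d-1\geq 5$, $(Z_n)$ admits integrable cut-times by \cite{BSZ03}, and between consecutive cut-times $T_{i-1}$ and $T_i$ the trajectory visits a random set $\D_i$ of sites, pairwise disjoint for distinct $i$. Decomposing
$$
\frac{X_n}{n}=\frac{T_{k(n)}}{n}\cdot\frac{X_{T_{k(n)}}}{T_{k(n)}}+\frac{X_n-X_{T_{k(n)}}}{n},\qquad k(n):=\max\{i\ge 1:T_i\leq n\},
$$
one obtains, in the i.i.d.\ case, i.i.d.\ pairs $(T_i-T_{i-1},X_{T_i}-X_{T_{i-1}})$ under $\hat P$, so Kolmogorov's SLLN applied to numerator and denominator yields a deterministic $V=\hat E(X_T)/\hat E(T)$. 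In the stationary $e_1$-exchangeable case, the blocks form a stationary sequence: $e_1$-exchangeability allows permuting the cookies along each horizontal line without changing the joint law, which exactly compensates for the fact that different blocks may intersect a given horizontal line at different $x$-coordinates. Birkhoff's ergodic theorem then produces an a.s.\ random limit $V$ with $\hat E(V)=\hat E(X_T)/\hat E(T)$.

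\textbf{Girsanov representation of the speed.} Conditionally on $(Z_n)$, the horizontal increments $(X_{n+1}-X_n)\in\{-1,0,+1\}$ under $\Pb_\be$ have density, with respect to the simple random walk law $\Pb_0$, equal to the product
$$
\prod_{(y,k)\,:\,y\text{ visited at least }k\text{ times}}\bigl(1+\be_k(y)\,\eta_k(y)\bigr),
$$
where $\eta_k(y)\in\{-1,0,+1\}$ records the horizontal step made at the $k$-th visit to $y$. Since $\{0\in\D\}$ and $T$ are functions of $(Z_n)$ only, $\hat E(T)$ does not depend on $\Qb$, so monotonicity of $v(\Qb)=\hat E(X_T)/\hat E(T)$ reduces to monotonicity in $\Qb$ of
$$
\hat E_0\Bigl(X_T\prod_{(y,k)}\bigl(1+\be_k(y)\,\eta_k(y)\bigr)\Bigr),
$$
the outer expectation being under $\hat P_0$, the simple random walk reference law.

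\textbf{Monotonicity argument and main obstacle.} Given a monotone coupling $\be^{(1)}\leq\be^{(2)}$, telescope the product site-by-site, reducing matters to non-negativity of correlations of the form $\hat E_0\bigl(X_T\,\eta_k(y)\,\one_{\{y\text{ visited }\ge k\text{ times before }T\}}\,G\bigr)$ for a suitable residual factor $G\geq 0$, with the sign governed by that of $y\cdot e_1$. Using the identical-cookie assumption and writing $X_T=\sum_y\bigl(R(y)-L(y)\bigr)$, where $R(y), L(y)$ count the right/left horizontal jumps from $y$, these correlations are controlled by random-walk intersection probabilities, and their summability requires a series of the form $\sum_x \Pb_0(\exists n\geq 1:S_n=x)^p$ (with $p$ fixed by the telescoping) to converge; this determines the threshold $d_0$. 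The principal obstacle is that without a smallness assumption on $\be$, the residual $G$ is only bounded by an exponential in the number of visits, and uniform control of the correlations forces a substantial dimensional cushion. For the refined statement ($d\geq 10$, $|\be|\leq\si_0$), the small-bias hypothesis turns $\prod(1+\be_k(y)\eta_k(y))-1$ into a norm-convergent perturbative expansion controlled by $\si_0$; the resulting correlations then need only summability of $\sum_x \Pb_0(\exists n\geq 1:S_n=x)^2$, which holds for $d-1\geq 9$, and $\si_0$ is chosen so that the linear-in-$\be$ leading term dominates the higher-order remainder.
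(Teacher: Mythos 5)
Your overall strategy (cut times for the vertical walk plus a Girsanov density for the horizontal increments) is the same as the paper's, but two steps are genuinely incomplete. First, in the law of large numbers, the inter-cut-time blocks $(T_i-T_{i-1},\,X_{T_i}-X_{T_{i-1}})$ are \emph{not} i.i.d.\ under $\hat{P}$, even for i.i.d.\ cookies: the event $\{n\in\D\}$ depends on the whole past and future of $Z$, so cut times are not renewal times and Kolmogorov's SLLN does not apply. What is true, and what the paper proves, is that the block sequence is stationary under the Palm measure and that the associated dynamical system is \emph{ergodic}; the deterministic value of $V$ in the i.i.d.\ case is exactly the content of this ergodicity (obtained by re-indexing the freshly eaten cookies by time and approximating invariant events by cylinder events), and it does not come for free. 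Similarly, your one-line claim that $e_1$-exchangeability ``exactly compensates'' the horizontal displacement is the heart of the stationary case: one needs the explicit bijection which fixes the horizontal lines visited during $[0,T]$ and translates all other lines by $-X_T$, combined with stationarity, to get $\hat{P}(Y_{T+\cdot}-Y_T\in\cdot)=\hat{P}(Y_{\cdot}\in\cdot)$, and this argument is missing from your sketch.

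Second, the monotonicity part has concrete gaps. (a) A monotone coupling of $\Qb_1,\Qb_2$ is not enough: the paper invokes the Aldous--Lyons result to obtain a \emph{stationary} monotone coupling, because stationarity of the coupled pair is what allows replacing $(\be_2-\be_1)(Y_i)$ by $(\be_2-\be_1)(0)$ in the error estimate; moreover the interpolated (or telescoped) environments are in general no longer $e_1$-exchangeable, so one must work with the auxiliary functional $f(t)=\Qb\Eb_{\be_t}(X_T 1_{0\in\D})/E(T1_{0\in\D})$, which coincides with a speed only at $t=0,1$. You address neither point. (b) The asserted non-negativity of the correlations is false as stated: in the paper's computation the off-diagonal terms (excitation before the cut time) are genuinely negative and are only dominated, not absent; positivity comes from the single term $j=0$, worth $\frac{1}{d}\Qb[(\be_2-\be_1)(0)]\,P(0\in\D)$, against an error bounded by $\frac{\si}{d^2}\Qb[(\be_2-\be_1)(0)]\,\hat{E}\left[\frac{(2T+1)T(T+1)}{2}\right]$, so the quantitative input is the uniform-in-$d$ moment bounds $\sup_{d\geq 8}\hat{E}(T^2)<+\infty$ and $\sup_{d\geq 10}\hat{E}(T^3)<+\infty$ of Lemma \ref{TL2}, not summability of $\sum_x\Pb_0(\exists n\geq 1: S_n=x)^p$; with $p=2$ that sum already converges for every $d>4$, so it cannot be what pins down $d\geq 10$. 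Finally, the obstacle you yourself name (the residual $G$ only exponentially bounded when $\be$ is not small) is precisely what your general-$d_0$ argument does not overcome; the paper avoids it because the density $M_T(\be_t)$ is reabsorbed into the probability $\Pb_{\be_t}$, so only polynomial moments of $T$ ever appear. As written, both the i.i.d.\ LLN statement and the monotonicity claims remain unproved.
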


About the law of large numbers (LLN):

To prove the law of large numbers, we use the technique of cut times as in 
\cite{BSZ03}, \cite{HoSu12}, \cite{Hol12}. Our contribution is to use it
for $e_1-$exchangeable stationary environment. In the i.i.d. setting, the LLN of Theorem $\ref{Therwrc}$ is a consequence of Theorem $1.1 $ of \cite{HoSu12}. However, the proof of the LLN for i.i.d. setting in our paper is not totally the same as in \cite{BSZ03}, \cite{HoSu12} (see Section \ref{seciid}). The formula $v(\Qb)=\frac{\hat{E}(X_T)}{\hat{E}(T)}$ obtained in our proof is different from the formula of the speed in $\cite{BSZ03}$. To use cut times, the dimension $d$ is required to be not smaller than $6$. This implies the existence of cut times of the projection $Z$ of the random walk $Y$ on the $d_1$ last coordinates ($d_1=d-1\geq 5$). In \cite{KZ14}, the LLN of Theorems $4.6$ and $4.8$ is proved for all dimension $d\geq 1$ using renewal structure. However, to use this technique, the conditions of uniform ellipticity of the cookie environment and the transience of the random walk in some direction $l\in \Rb^d $ are needed.

About the monotonicity of the speed:

Our result is to prove for the case of $e_1-$exchangeable and stationary cookie environment. For the i.i.d. setting,  M. Holmes and R. Sun \cite{HoSu12} considered random walks in partially random
environment which is similar to random walks with an infinite number of
identical random cookies, $m=+\infty$. In this model the probability of stepping in $d_1$ last coordinates is random (i.e. this probability depends on the cookie environment) and $d_0=d-d_1\geq 1$. The question of monotonicity is considered under the assumption that there is an explicit coupling of two laws of the random environments. In this case, the laws of the random environments are allowed to take two values, say $\nu_1$ and $\nu_2$ with probabilities
$\be$ and $1-\be$ (where $\be$ is a constant in $[0,1]$). They proved the monotonicity of the speed with respect to $\be.$ 

In the paper, we prove the monotonicity for all $1\leq m \leq +\infty$. In fact, there is an intersection between our model and the model used in \cite{HoSu12} that the projected random walk on $\Zb^{d_1}$ is a simple random walk, $m=+\infty$ and $d_0=1$. In this case, the monotonicity can be easily proved by coupling argument for stochastic domination (see \cite{HoSu12}, page $5$). 

 About the methodology, for the case of the probability of stepping in $d_1$ last coordinates is not random, with the method cut times and Girsanov's transform, the explicit coupling of the laws of random environments used in \cite{HoSu12} is not needed in our proof. We notice that the lace expansion method can be applied to prove the monotonicity of the speed of Theorem \ref{Therwrc}. More precisely, using the stationary coupling $\be_t=(1-t)\be_1+t\be_2$, we can prove  the existence of the speed by the law of large numbers. Together with boundedness and convergence of the lace expansion series, the lace expansion formula for the expectation of the speed then follows. These techniques can be found in  \cite{HoSu12}.

In \cite{Hol12}, M. Holmes asked about monotonicity of the speed with
respect to stochastic domination. He considered the model with $1\leq m\leq +\infty$, $d_0=1$ and  the probability of stepping in $d_1$ last coordinates is not random. The author proved the following result:

\begin{thm*}[Theorem 2.3, \cite{Hol12}]\label{ThHol} Set $\de_i:=\Eb[\be_i(0)]$. Let $A$ be a finite set of integers $A\subset N$. If $\be_i(o)$ is independent
of $(\be_j(o))_{j\ne i}$ for each $i\in A$, then for each fixed joint distribution of $\be_{A^c}(o) = (\be_i(o))_{i\notin A}$,
the annealed speed $v$ in dimension $d$ is a continuous function of $(\de_i)_{i\in A}$ when $d\geq 6$ and is differentiable in $\de_i$ for each $i\in A$ when $d\geq 8$. If $1\in A$, then $v$ is strictly increasing in $\de_1$ when $d\geq 12$.
\end{thm*}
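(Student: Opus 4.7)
The plan is to prove Theorem~2.3 using the cut-time representation $v=\hat{E}[X_T]/\hat{E}[T]$ from Theorem~\ref{Therwrc}, combined with a Girsanov-style change-of-measure; the three dimensional thresholds ($d\ge 6$, $d\ge 8$, $d\ge 12$) will appear as three successively higher moments of $T$ becoming finite. The crucial structural input is that the transverse walk $(Z_n)$, and hence the cut-time set $\D$ and the normalizing constant $P(0\in\D)$ appearing in $\hat{P}$, is a simple random walk on $\Zb^{d-1}$ independent of $\beta$; so only the horizontal dynamics are sensitive to $\delta_j$. The starting reduction is that under the hypothesis each $\beta_i(o)$ with $i\in A$ is independent of all other randomness and is consulted at most once along the walk (on the $i$-th visit to $o$); averaging over it first, the annealed law depends on $\beta_A$ only through $\delta_i=\Eb[\beta_i(0)]$, and one may take $\beta_i(o)\equiv\delta_i$ for $i\in A$, so that $\delta_j$ enters directly as a parameter of the transition kernel.

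Under the natural coupling of two walks with parameters $\delta$ and $\delta+he_j$ ($j\in A$), the Radon--Nikodym density on $\F_n$ is
\[
R_n(h)=\prod_{o\in V_n^{(j)}}\frac{1+(\delta_j+h)\epsilon_{j,o}}{1+\delta_j\epsilon_{j,o}},
\]
where $V_n^{(j)}$ is the set of sites visited at least $j$ times before $n$ and $\epsilon_{j,o}\in\{-1,0,+1\}$ records the horizontal component of the step made on the $j$-th visit. Its logarithmic derivative
\[
G_n^{(j)}:=\partial_h\log R_n(0)=\sum_{o\in V_n^{(j)}}\frac{\epsilon_{j,o}}{1+\delta_j\epsilon_{j,o}}
\]
satisfies $|G_n^{(j)}|\le n/(1-|\delta_j|)$.

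Continuity for $d\ge 6$ then follows by dominated convergence applied to $\hat{E}[F(X_T,T)R_T(h)]$ as $h\to 0$ for $F(x,t)=x$ and $F(x,t)=t$; the input needed is $\hat{E}[T]<\infty$, the Bolthausen--Sznitman--Zeitouni cut-time estimate in this regime. Differentiability for $d\ge 8$ uses the same identity and interchanges $\partial_h$ with $\hat{E}$; the justification requires $\hat{E}[T\,|G_T^{(j)}|]<\infty$, controlled by $\hat{E}[T^2]<\infty$, which is the cut-time tail estimate available at $d\ge 8$. One obtains
\[
\partial_{\delta_j}v=\frac{\hat{E}\bigl[(X_T\hat{E}[T]-T\hat{E}[X_T])\,G_T^{(j)}\bigr]}{\hat{E}[T]^2}.
\]

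The main obstacle is strict monotonicity in $\delta_1$ at $d\ge 12$, because the formula above is a mean-zero covariance and is not manifestly signed. My plan is to isolate in $G_T^{(1)}$ the contribution of the origin site --- whose $\epsilon_{1,0}$ directly biases the sign of $X_T$ through the walk's very first horizontal step --- and to condition on all other randomness to extract a strictly positive first-order term; the remaining site contributions enter as a higher-order fluctuation controlled by a second moment of $G_T^{(1)}$, whose absorption into the lower bound demands $\hat{E}[T^4]<\infty$, the moment estimate available at $d\ge 12$. Producing a \emph{quantitative} positive lower bound, rather than mere non-negativity, and arranging the moment budget so that the error does not dominate the main term, is what I expect to be the delicate step.
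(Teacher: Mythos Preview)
This theorem is not proved in the present paper; it is quoted from Holmes \cite{Hol12}, where it is established via lace expansion. There is therefore no proof here to compare your attempt against. Your proposal instead transplants the cut-time/Girsanov machinery that this paper develops for its own results (Theorems~\ref{Therwrc}, \ref{md}, \ref{ThERW}), so let me assess it in that light.

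The reduction (replacing $\beta_i(o)$ by the constant $\delta_i$ for $i\in A$) is correct and is exactly what the paper remarks just after stating the theorem. Your continuity and differentiability arguments are in the right spirit and the moment thresholds line up with the paper's own Lemma~\ref{Dhspeed}. Two small corrections: (i) the passage to the limit requires uniform integrability (the paper's Lemma~\ref{bd1}), not dominated convergence, since $R_T(h)$ admits no $h$-uniform pointwise bound; (ii) because $T$ is $\sigma(Z)$-measurable and the law of $Z$ does not depend on $\delta_j$, the quotient-rule term $-T\,\hat E[X_T]$ in your derivative formula contributes zero --- the paper's computations are all organised as $\partial_{\delta_j}v=\partial_{\delta_j}\hat E[X_T]/\hat E[T]$ directly.

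The real gap is the strict-monotonicity step. Your plan conflates two distinct thresholds: the dimension at which a given moment of $T$ becomes finite, and the (larger) dimension at which the resulting lower bound actually becomes positive. When the paper carries out exactly this style of argument (Section~\ref{Mono} and the proof of Theorem~\ref{ThERW}), it obtains positivity only for $d\ge d_0$ with $d_0$ defined \emph{implicitly} through an inequality such as $d\ge\sigma\,\hat E[(2T+1)T(T+1)/2]$, and it explicitly concedes that ``the constant $d_0$ in our method depends on the moments of $T$. While by using lace expansion method, M.~Holmes and co-authors gave an explicit integer $d_0$, \ldots in Theorem~2.3 of \cite{Hol12} $d_0=12$.'' So even granting $\hat E[T^4]<\infty$ for $d\ge 12$, the cut-time route yields monotonicity only for all sufficiently large $d$, not for $d\ge 12$; recovering the explicit constant $12$ genuinely requires the lace-expansion bookkeeping of the original proof in \cite{Hol12}.
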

Under the conditions of this theorem, for $i\in A\text{ and }i>0$, the speed depends on the $\be_i$ via the mean $\de_i=\Eb[\be_i(x)]$ where $x\in\Zb^d$. This means that the law of the random walk does not change when we replace $\be_i, i\in A$ by the constant $\de_i$. Here the speed is monotone in the first drift $\de_1$ when the $i$-th cookie is independent of the others for $i\in A$ and $1\in A$. This is a special case of stochastic domination. The model in our paper is quite similar to the model in \cite{Hol12} except the conditions of the random cookie environment. We prove the monotonicity of the speed with respect to the law of the random cookie environment $\Qb$ for the special case of $m$ identical random cookies.


\subsection{Excited random walk with $m$ identical deterministic cookies ($m$-ERW)}
This model is a partial model of $m-$ERWRC when the cookie environment is not random and identical, i.e. the cookies are the same for every site:
$$ \forall k \mbox{ such that } 1\leq k\leq m \, , \, \, 
\forall y \in \Zb^{d} \, , \, \, \be_k(y) = \be \, , 
$$
for some real number $\be\in[0,1]$. We see that the $m-$ERW is also a partial model of the model called multi-excited random walk which was introduced in \cite{Zer05}. Let $\Pb_{m,\be}$ denote the law of  $m$-ERW. As $m$ is large,
the $m$-ERW is more and more like a simple random walk with bias $\be$. Let $v(m,\be)$ be 
the speed of the $m$-ERW  whose existence is proven for $d \geq 2$ in \cite{BR07}, \cite{MPRV12}, \cite{KZ14}.
 We prove in Section \ref{Sec4} the following result:  
\begin{thm}\label{md}
For $d\geq 8$, the speed  $v(m,\be)$ is differentiable w.r.t $\be$ in $[0,1)$.  Moreover, the derivative 
converges to
$\frac{1}{d}$, uniformly in $\be$ on compact subsets of $[0,1)$: for any $\be_0 \in [0,1)$,
$$ \lim_{m\to\infty} \sup_{\be \in [0,\beta_0]} \left| \frac{\partial}{\partial\be}v(m,\be) - \frac{1}{d} \right| =0 \, .$$
Hence, there exists $m(\be_0)$ such that for $m\geq m(\be_0)$ 
the speed of the $m$-ERW is increasing in $\be$ on $[0,\be_0]$.

\end{thm}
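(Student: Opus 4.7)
The starting point is the speed representation of Theorem~\ref{Therwrc}, specialised to the deterministic identical environment (so $\Qb = \de_{\be}$, a Dirac mass at the constant cookie~$\be$):
$$ v(m,\be) \;=\; \frac{\hat{E}_{m,\be}[X_T]}{\hat{E}_{m,\be}[T]} \;=\; \frac{E_{m,\be}[X_T\,\ind_{\{0\in\D\}}]}{E_{m,\be}[T\,\ind_{\{0\in\D\}}]}, $$
the normalisation $P_{m,\be}(0\in\D)$ cancelling between numerator and denominator. I plan to differentiate both expectations in~$\be$ via a Girsanov change of measure against a $\be$-free reference law, and then analyse the $m\to\infty$ regime by a second change of measure against the constant-drift walk.

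Writing $P_0$ for the simple random walk law, the Radon-Nikodym density $L = dP_{m,\be}/dP_0$ restricted to the information needed to describe the cut-time block at~$0$ is an explicit polynomial in~$\be$,
$$ L \;=\; \prod_{n} \bigl(1 + \ep_n\,\be\,\ind_{E_n}\bigr), \quad E_n = \{Y_n \notin^{m+1},\; Y_{n+1}-Y_n \in \{\pm e_1\}\}, \quad \ep_n = (Y_{n+1}-Y_n)\cdot e_1, $$
so that $v(m,\be)$ becomes a ratio of $P_0$-expectations of explicit polynomials in~$\be$. For $\be\in[0,\be_0]$ one has the pointwise bound $|\partial_{\be} L| \le L\cdot T/(1-\be_0)$, so the interchange of $\partial_{\be}$ with expectation is permissible as soon as $E_{m,\be}[T^2]<\infty$. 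This second moment on the cut times is inherited from the $(d-1)$-dimensional projected simple random walk~$Z$ and holds once $d-1\ge 7$, i.e. $d\ge 8$; this is one dimension above the first-moment threshold used in the LLN, and explains the dimensional condition in the statement.

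For the uniform limit of the derivative, I introduce the reference law $P'_{\be}$ under which $Y$ is a random walk with constant horizontal bias (step $+e_1$ with probability $(1+\be)/(2d)$ at every site and every visit). Under $P'_{\be}$ the speed is exactly $\be/d$ with derivative $1/d$, so it suffices to control $v(m,\be)-\be/d$ and its derivative. The density $dP_{m,\be}/dP'_{\be}$ on the cut-time $\si$-field equals~$1$ except on $\{\max_y \#\{n \le T : Y_n = y\} > m\}$, whose $P'_{\be}$-probability tends to~$0$ as $m\to\infty$, uniformly in $\be\in[0,\be_0]$, by transience of~$P'_{\be}$ in dimension $d\ge 8$ and the resulting exponential tail of site occupation times. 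Combining this comparison with the differentiation step above yields
$$ \sup_{\be\in[0,\be_0]} \left|\frac{\partial v(m,\be)}{\partial\be} - \frac{1}{d}\right| \xrightarrow[m\to\infty]{} 0, $$
and the monotonicity of $\be\mapsto v(m,\be)$ on $[0,\be_0]$ for $m\ge m(\be_0)$ follows by choosing $m(\be_0)$ so that the supremum above is smaller than $1/(2d)$.

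I expect the principal technical obstacle to be the joint control of the Girsanov factor and of the cut-time moments uniformly in $\be\in[0,\be_0]$: as $\be_0\to 1$, the factor $1/(1-\be)$ appearing in $\partial_{\be}L$ must be offset by sufficiently strong moment bounds on~$T$ and on $\max_y \#\{n\le T: Y_n=y\}$, both inherited through the projected walk~$Z$. It is this tension between the Girsanov singularity near $\be=1$ and the available dimension-dependent moments that fixes the threshold $d\ge 8$ and forces the quantifier ``$m\ge m(\be_0)$'' rather than a single $m$ working for all $\be\in[0,1)$.
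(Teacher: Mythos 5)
Your overall architecture is the same as the paper's: the representation $v(m,\be)=\Eb_{m,\be}(X_T\,1_{0\in\D})/\Eb_{m,\be}(T\,1_{0\in\D})$, differentiation in $\be$ through the Girsanov density $M^m_T(\be)$ relative to the driftless law $\Pb_0$ (with the interchange justified by $\hat{\Eb}(T^2)<\infty$, hence $d\ge 8$), and a comparison with the constant-drift walk, which is precisely the paper's $m=\infty$ family $N^\infty_T$, $M^\infty_T(\be)$, $U^\infty_T(\be)$; and you correctly locate the discrepancy on the event that some site is visited more than $m$ times before $T$, an event contained in $\{T\ge m\}$. (Minor slip: the cookie indicator in the density should be $1_{Y_n\notin^{m}}$, i.e.\ fewer than $m$ previous visits, not $1_{Y_n\notin^{m+1}}$.)

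The genuine gap is in the step ``combining this comparison with the differentiation step.'' The integrands to be compared are unbounded: the derivative functional is of order $T^2/(1-\be_0)$, and on the bad event the density ratio $dP_{m,\be}/dP'_\be$ can be of order $(1-\be_0)^{-K}$ ($K$ the number of excited steps from over-visited sites), while under $\Pb_0$ one only has $\sup_\be|M^m_T(\be)-M^\infty_T(\be)|\le 2^T(T-m)_+$. Smallness of the probability of the bad event together with ``exponential tails of site occupation times'' does not close this: a H\"older/Cauchy--Schwarz offset of $(1-\be_0)^{-K}$ works only when $1-\be_0$ exceeds a power of the return probability $\rho$ of the projected walk, so it cannot cover every $\be_0\in[0,1)$; and absorbing the $2^T$ factor would require exponential moments of the cut time $T$, which do not exist -- only $\hat{\Eb}(T^2)<\infty$ for $d\ge 8$ (Lemma \ref{TL2}). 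What actually saves the argument, and what the paper exploits, is that $T$, $\{0\in\D\}$ and $\{T>m\}$ are measurable with respect to $Z$ alone, whose law is free of $\be$ and $m$; hence
$\Eb_0\bigl[T^2 M^m_T(\be)\,1_{T>m}\,1_{0\in\D}\bigr]=\Eb_{m,\be}\bigl[T^2\,1_{T>m}\,1_{0\in\D}\bigr]=\Eb_0\bigl[T^2\,1_{T>m}\,1_{0\in\D}\bigr]\to 0$
as $m\to\infty$, uniformly in $\be$, by dominated convergence. The paper packages this mechanism through Lemma \ref{bd1} (uniform integrability via the constancy of $\Eb_0[T^2M^m_T(\be)1_{0\in\D}]$ in $\be$) and the uniform-convergence Lemma \ref{bd2}, together with the a.s.\ bounds $|N^m_T-N^\infty_T|\le d(T-m)_+$ and its analogues. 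Your outline never invokes this identity nor an equivalent uniform-integrability statement, and the moment bounds you propose in its place cannot deliver the claimed uniformity on $[0,\be_0]$ for every $\be_0<1$; with that ingredient added, your plan does reduce to essentially the paper's proof.
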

The differentiability of the speed was proved in \cite{Hol12} Theorem 2.3. The rest could also be obtained by minor modification of the proof of Theorem $2.3$ of \cite{Hol12}.
\subsection{Excited random walk}

Excited random walk is introduced in \cite{BW03}, this model is a partial case of $m-$ERW when $m=1$.
Our main result  for the excited random walk is the following:

\begin{thm}Let $v(\be)$ be the speed of ERW with bias $\be.$
\label{ThERW}
\begin{enumerate} 
\item $v(\be)$ is differentiable in $\be\in [0,1)$ for $d\ge 8$. For $d\geq 6$,
 the derivative at the critical point $0$ exists, is positive  
 and satisfies :
 $$\lim_{\be\to 0}\frac{v(\be)}{\be}=\frac{1}{d}R(0) \, , $$
where $R(0):=\lim_{n\to\infty}(R_n/n)$, $R_n$ is the number of points visited  at time $n$ by the  symmetric simple random walk on $\Zb^{d}.$
\item There exist $d_0\in \mathbb{N}^{*},\,\be_0\in(0,1)$ such that the speed of the excited random walk is strictly increasing in $\be \in\left[0,1\right]$ for $d\geq d_0$ and strictly increasing in $\be\in[0,\be_0)$ for $d\ge 8$.
\end{enumerate}
\end{thm}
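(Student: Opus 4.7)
The plan is to obtain Theorem~\ref{ThERW} as a specialization of Theorems~\ref{Therwrc} and~\ref{md}. The ERW with bias $\be$ is the $m=1$ case of the $m$-ERW and also corresponds to a $1$-ERWRC with the deterministic environment $\Qb_{\be} = \delta_{\be(\cdot)\equiv\be}$. The differentiability claim of part (1) on $[0,1)$ for $d\ge 8$ is therefore immediate from Theorem~\ref{md}.

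For the derivative at $\be = 0$ when $d\ge 6$, I would start from the cut-time formula $v(\Qb) = \hat{E}(X_T)/\hat{E}(T)$ provided by Theorem~\ref{Therwrc} and exploit a martingale decomposition. Since the conditional expected $e_1$-increment equals $\be/d$ at a first visit and vanishes at every revisited site, one obtains
\[
\hat{E}_{\be}[X_T] \;=\; \frac{\be}{d}\,\hat{E}_{\be}[R_T],
\]
where $R_T$ denotes the range of the walk before the first cut-time $T$. Since $T$ is measurable with respect to the projected simple random walk $Z$ on $\Zb^{d-1}$, whose law is independent of $\be$, one has $\hat{E}_{\be}[T] = \hat{E}_0[T]$ and therefore $v(\be)/\be = (1/d)\hat{E}_{\be}[R_T]/\hat{E}_0[T]$. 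A Girsanov argument based on the density $M_T = (1+\be)^{R_T^+}(1-\be)^{R_T^-}$, where $R_T^{\pm}$ counts first visits before $T$ followed by a $\pm e_1$ step, then yields $\hat{E}_{\be}[R_T]\to \hat{E}_0[R_T]$ as $\be\to 0$, using the integrability of $T$ (and hence of $R_T\le T$) for $d\ge 6$. A standard renewal argument applied to the cut-time cycles identifies $\hat{E}_0[R_T]/\hat{E}_0[T]$ with $R(0)$, which completes part (1).

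For part (2), the monotonicity for $d\ge d_0$ on $[0,1]$ is a direct specialization of the monotonicity item~(1) of Theorem~\ref{Therwrc}: for $\be_1\le\be_2$ the deterministic laws $\Qb_{\be_1}$ and $\Qb_{\be_2}$ are trivially monotonically coupled, so $\Qb_{\be_1}\prec\Qb_{\be_2}$ and $v(\be)$ is increasing. For strict monotonicity on $[0,\be_0)$ when $d\ge 8$, I would combine the differentiability on $[0,1)$ with the positivity $v'(0) = R(0)/d > 0$ obtained above, and show that $v'$ is continuous at $0$. From the formula $v(\be) = (\be/d)\hat{E}_{\be}[R_T]/\hat{E}_0[T]$, continuity of $v'$ at $0$ reduces to the continuity of $\hat{E}_{\be}[R_T]$ in $\be$ together with the local boundedness of $\frac{d}{d\be}\hat{E}_{\be}[R_T]$ near~$0$. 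The main obstacle will be this latter bound, which I expect to obtain from second-moment estimates on $R_T^+ - R_T^-$ that exploit the stronger cut-time integrability available for $d\ge 8$.
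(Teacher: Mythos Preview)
Your treatment of part~(1) is essentially the paper's argument. The paper also derives $v(\be)/\be=(1/d)\Eb_0(N_T1_{0\in\D}M_T(\be))$ with $N_T=d\sum_{j<T}1_{Y_j\notin}1_{Z_j=Z_{j+1}}$ (rather than $R_T$ itself), passes to the limit $\be\to 0$ via uniform integrability of $\{T\,1_{0\in\D}M_T(\be)\}_\be$ (Lemma~\ref{bd1}), and then identifies $\Eb_0(N_T1_{0\in\D})$ with $R(0)$ by the cut-time renewal computation you sketch. Your invocation of Theorem~\ref{md} for differentiability on $[0,1)$ is a legitimate shortcut; the paper instead specializes the proof of differentiability of $f(t)$ from Section~\ref{ft} directly.

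Part~(2) is where your proposal diverges from the paper, and where it is incomplete. The paper does \emph{not} merely quote Theorem~\ref{Therwrc}; it goes back into the derivative estimate of Section~\ref{Mono} and exploits the simplification available for \emph{constant} environments. When $\be_1,\be_2$ are constants, $(\be_2-\be_1)(Y_i)\equiv\be_2-\be_1$, so the stationarity/translation step that sums over $x\in\Zb$ and introduces the factor $(2T+1)$ in Section~\ref{Mono} is unnecessary. Stopping at the line labeled \eqref{EQ} and summing directly gives the sharper bound
\[
\hat{E}(T)\,\frac{\partial f}{\partial t}(t)\;\ge\;\frac{\be_2-\be_1}{d}\left[1-\frac{\si}{d}\,\hat{E}\!\left(\frac{T^2+T}{2}\right)\right],
\]
which requires only $\hat{E}(T^2)<\infty$ rather than $\hat{E}(T^3)<\infty$. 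Via Lemma~\ref{TL2} this is finite \emph{uniformly} for $d\ge 8$, and one reads off both conclusions at once: strict monotonicity on $[0,1]$ for $d\ge d_0:=\max\{\lfloor\sup_{d\ge 8}\hat{E}((T^2+T)/2)\rfloor+1,8\}$, and strict monotonicity on $[0,\be_0)$ with $\be_0=8/d_0$ for every $d\ge 8$.

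Your alternative route---positivity of $v'(0)$ plus continuity of $v'$ near $0$---could be made to work, but as written it has a gap you yourself flag. Bounding $\tfrac{d}{d\be}\hat{E}_\be[R_T]$ amounts to controlling $\Eb_0[R_T\,M_T(\be)\,U_T(\be)\,1_{0\in\D}]$ with $|U_T(\be)|\le T/(1-\be)$, hence an $\hat{E}(T^2)$ bound, which is indeed available for $d\ge 8$. However, a bare continuity argument yields a $\be_0$ that a priori depends on $d$; the theorem demands a single $\be_0$ valid for all $d\ge 8$. You would need to invoke the uniform bound $\sup_{d\ge 8}\hat{E}(T^2)<\infty$ of Lemma~\ref{TL2} explicitly to make $\be_0$ uniform. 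The paper's direct re-estimation of $\partial f/\partial t$ bypasses this issue entirely and delivers both halves of part~(2) in one line.
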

For the monotonicity of the speed in a neighborhood of $0$, we need $d\geq 10$ in Theorem \ref{Therwrc}, but in Theorem \ref{ThERW} here, we need only $d\geq 8.$
In the point $1$ of Theorem \ref{ThERW}, the differentiability of $v(\be)$ on $[0,1)$ for $d\geq 8$ is contained in Theorem $2.3$ of \cite{HoSu12}. However, we add the differentiability at the critical point $0$ for $d\geq 6$. The point $2$ is proved in \cite{vdHH10} for $d_0=9$ by the lace expansion method.

In our paper, we prove the results by using cut times and Girsanov's transform. 
Our proof is based on two ingredients:
\begin{itemize}
\item Using stationary properties, it is possible to express the expectation of the speed in the direction $e_1$ as follows:
\begin{equation}\label{F1}
v(\Qb)=\frac{\Qb\Eb_{\be}(X_T|0\in\D)}{E_{\be}(T|0\in \D)},
\end{equation}
where $\Eb_{\be}$ is the expectation under  the ``quenched'' law $\Pb_{\be}$ of ERWRC, $\D$ is the 
set of cut times, and $X_T=Y_T\cdot e_1$. In the case of i.i.d. cookies, $v(\Qb)$ is also the speed when the speed is deterministic.
\item Starting from $\eqref{F1}$, we consider two random cookies $\be_1$ and $\be_2$, and a stationary coupling $\be_t=(1-t)\be_1+t\be_2, t\in[0,1].$ We get the expectation of the speed for random cookies $\be_t$ (see \eqref{vitesst}) as follows:
\begin{align} \label{F2}
  f(t):=\frac{\Qb\Eb_{\be_t}(X_T \, 1_{0\in\D})}{E(T \, 1_{0\in\D})}.
\end{align}
Where, $E$ is the expectation w.r.t. $P$. We use Girsanov's transforms to make the dependence of $f(t)$ w.r.t to 
$t$ more explicit. This enables us to compute the derivative of $f(t)$ when $d\geq 8$ and to prove that this derivative is positive for $d$ high enough or if random cookie is small enough to $0$. 
\end{itemize}

All results of differentiability and monotonicity in \cite{vdHH10}, \cite{Hol12},
\cite{HoSu12} were proved by using the lace expansion. We do not use this method in this paper.  In \cite{HoSu12}, the authors used cut times to prove the law of large numbers. The existence of the speed and the convergence of the lace expansion series allow to express the speed by the lace expansion formula. This formula was used in calculating the derivative and showing that the derivative is positive. 
In this paper, to prove the law of large number we also use the cut times. However, with different arguments, we obtain the formulas of the speed (see \eqref{F1} and \eqref{F2}), which are more explicit than the formulas in the previous works. 

In other to prove the monotonicity of the speed, we do not use the lace expansion formula, we use directly the formulas \eqref{F1} and \eqref{F2} of the speed via cut time $T$. These formulas have the
advantage that the denominator $\Eb_{\be}(T|0\in \D)$ does not depend on random cookie $\be$. Girsanov's transform gives an expression of the derivative $\frac{\partial f}{\partial t}(t)$ via the cut time $T$ (see \eqref{dht}). Using this formula, we estimate the derivative of the speed and obtain that the derivative is positive when $d$ large enough depending on the moments of $T$. Here, the condition $d\geq 6$ is needed for the existence of cut time, and we have $\sup_{d\geq 6}\hat{E}T=\sup_{d\geq 6}\frac{1}{P(0\in\D)}<+\infty$. 

In the proof of the monotonicity in Theorem \ref{Therwrc}, in the estimation of the derivative, there is the appearance of the third moment of cut time $T$ (see \eqref{T3}). Therefore, we need $d\geq 10$ to get $\sup_{d\geq 10}\hat{E}(T^3)<+\infty$. For the particular case of ERW, the second moment of cut time $T$ appears (see \eqref{T2}). Hence, we need $d\geq 8$ to have that $\sup_{d\geq 8}\hat{E}(T^2)<+\infty.$ 

Notice that the constant $d_0$ in our method depends on the moments of $T$. While by using lace expansion method, M. Holmes and co-authors gave a explicit integer $d_0$, example in \cite{vdHH10} $d_0=9$, in Theorem $2.3$ of \cite{Hol12} $d_0=12$.


The paper is organized as follows: in Section \ref{Sec2}, we prove Theorem \ref{Therwrc}. First we give a construction of $m-$ERWRC. We then prove the law of large numbers and obtain an expression of the
speed by using cut times for stationary and $e_1-$exchangeable cookies. In the particular case of i.i.d. cookies, we prove that the speed is deterministic. Using Girsanov's transforms, we get the
derivative of the speed and estimate it to obtain the differentiability
and monotonicity of the speed. Section \ref{Sec3} is devoted to the proof of Theorem \ref{ThERW} based on that of Theorem \ref{Therwrc}. In Section \ref{Sec4}, we prove Theorem \ref{md}.  The key of the proof is Lemma \ref{bd2}. We use this lemma to show that the derivative of the speed tends uniformly in the drift $\be$ to a positive constant when the number of cookies tends to the infinity.

\section{Proof of Theorem \ref{Therwrc}}\label{Sec2}
\subsection{A construction of $m-$ERWRC} \label{contructES}
We begin this section by constructing the $m-$ERWRC from some independent 
sequences of random variables. This plays an important role to prove the monotonicity. 
Fix $\be(y)=(\be_1(y),\be_2(y),...,\be_m(y)), y\in\Zb^d.$ First, we consider a simple random walk (SRW) $\{ \tilde{Z}_n \}_{n\in\Zb}$ on $ \Zb^{d-1}$ where $\tilde{Z}_0:=0$. Let three sequences of random variables and random vectors
$\{\et_i\}_{i\geq 0}$,$\{\xi_i\}_{i\geq 0}$ and $\{\ze_1(y),...,\ze_m(y)\}_{y\in\Zb^d}$ such that every random variable in these sequences is independent of each other,
 independent of $\tilde Z$ and having distribution
$$\et_i\sim Ber\left(\frac{1}{d}\right),\quad \xi_i\sim Ber\left(\frac{1}{2}\right),
\quad \ze_k(y)\sim Ber\left((\be_k(y)+1)/2\right)\text{ where } 1\leq k\leq m.$$
$\{\tilde{Z}_n\}_{n\geq 0}$ will give the sequence of vertical moves of the excited random walk, 
${\et_i=+1}$ will mean that at time $i$, the excited random walk performs an horizontal move. The direction 
of this move is given by $\xi_i$ when the $m-$ERWRC is at a site that has been visited more than $m-1$ times before the time $i$ , and by $\ze_k(y), k\in\{1,2,...,m\}, y\in\Zb^d$ otherwise. 
More precisely, set $A_i^n:=\{\sum_{j=0}^{n-1}(1-\et_j)=i\}$, $(0\leq i\leq n)$ for $n>0$ and $A_0^0:=\Om$. 
Then for every $n\geq 0$, we have $\bigcup_{i=0}^{n}A_i ^n=\Om$ and $A_i ^n\bigcap A_j^n=\emptyset$ for $i\neq j$. 
We define the vertical component $Z$ of $Y$ by: 
\begin{equation}
\label{lienZ-tildeZ}\forall n\in\Zb , Z_n= \begin{cases}\tilde{Z}_{0} & \mbox{ if } n = 0 \, , \\
		\tilde{Z}_{\sum_{i=0}^{n-1}(1-\et_i)} & \mbox{ if } n > 0 \, , \\
		\tilde{Z}_{-\sum_{i=n}^{-1}(1-\et_i)} & \mbox{ if } n < 0 \, .
		\end{cases}
\end{equation}
We  now construct the horizontal component $X$ of $Y$. 
Set $Y_0:=0$ and  assume that $(Y_j,  0 \leq j\leq i)$ are constructed. Let us define $Y_{i+1}$. 
On the event $Y_i\notin_k$ i.e. $Y_i$ has been exactly visited $k-1$ times before time $i$, set
$$ 
\E_i:=\begin{cases} (2 \ze_k(Y_i) -1) \ind_{\et_i=1}&\text{ if }1\leq k\leq m\, ,\\
(2 \xi_i -1) \ind_{\et_i=1}&\text{ if } k>m\,.
\end{cases}
$$
We then set $X_{i+1}:=X_i + \E_i$, and $Y_{i+1}:=(X_{i+1},Z_{i+1})$.
With this construction, we obtain:
\begin{lem}\label{3}$Y$  is a $m-$ERWRC of the quenched law $\Pb_{\be}$.
\end{lem}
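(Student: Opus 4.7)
The plan is to verify directly that the constructed process $Y=(X,Z)$ satisfies the defining properties of $\Pb_\be$, namely $Y_0=0$ and the prescribed conditional transition probabilities. I would work with the filtration $\Fcr_n := \si(Y_0,\ldots,Y_n)$ and show that, conditionally on $\Fcr_n$, the increment $Y_{n+1}-Y_n$ has the desired law. The construction splits the move into an ``orientation'' step (vertical vs. horizontal), governed by $\et_n$, and a ``sign'' step, governed either by the next increment of $\tilde Z$, by $\xi_n$, or by some $\ze_k(Y_n)$. Since the three families $\{\et_i\}$, $\{\xi_i\}$, $\{\ze_k(y)\}$ and the walk $\tilde Z$ are mutually independent, the bulk of the work is to track which of these variables have already been consumed by $(Y_0,\ldots,Y_n)$.

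Concretely, I would proceed in three steps. First, observe from \eqref{lienZ-tildeZ} that the process $(Z_n)$ is obtained from the SRW $\tilde Z$ by the time change $n\mapsto \sum_{j<n}(1-\et_j)$; in particular, on the event $\{\et_n=0\}$, the increment $Z_{n+1}-Z_n$ equals the ``next unused'' increment of $\tilde Z$, which is independent of $\Fcr_n$ and uniform on $\{\pm e_2,\ldots,\pm e_d\}$. Second, show by induction on $n$ that $\Fcr_n$ is contained in the $\si$-algebra generated by $\{\et_i,\xi_i : i<n\}$, the first $\sum_{j<n}(1-\et_j)$ increments of $\tilde Z$, and the collection of ``already-consumed'' cookies $\{\ze_{k}(y) : y\in\Zb^d, k\leq \#\{j<n: Y_j=y\}\wedge m\}$. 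Hence $\et_n$, $\xi_n$, the next $\tilde Z$-increment, and—crucially—each $\ze_k(y)$ with $k > \#\{j<n: Y_j=y\}$ are independent of $\Fcr_n$ with their stated Bernoulli distributions.

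Third, combine these facts on the event $\{Y_n\notin_k\}$: if $\et_n=0$ (probability $1-1/d$ independently of $\Fcr_n$), then $Y_{n+1}-Y_n$ is one of $\pm e_2,\ldots,\pm e_d$, each with conditional probability $\frac{1}{2(d-1)}\cdot(1-1/d)=\frac{1}{2d}$; if $\et_n=1$ and $1\le k\le m$, then $Y_{n+1}-Y_n=(2\ze_k(Y_n)-1)e_1$, which on $\{Y_n\notin_k\}$ has not been used before by the inductive invariant, so equals $+e_1$ (resp.\ $-e_1$) with conditional probability $\frac{1}{d}\cdot\frac{1+\be_k(Y_n)}{2}=\frac{1+\be_k(Y_n)}{2d}$ (resp.\ $\frac{1-\be_k(Y_n)}{2d}$); and if $k>m$ the same computation with $\xi_n$ in place of $\ze_k(Y_n)$ yields $\frac{1}{2d}$. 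These match the quenched transition probabilities exactly.

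The main obstacle is the inductive bookkeeping in the second step, i.e.\ verifying that for every $k$ and $y$ with $k>\#\{j<n:Y_j=y\}$, the variable $\ze_k(y)$ is indeed independent of $\Fcr_n$. This is where the ``one cookie consumed per visit'' structure of the construction matters: the index $k$ used at step $i$ is exactly $\#\{j\le i:Y_j=Y_i\}$, so $\ze_k(y)$ with $k$ larger than the current visit count to $y$ has never appeared in the formula for any $Y_j$ with $j\le n$. Once this invariant is phrased precisely and propagated through one step, the rest of the proof is routine combinatorics of conditional probabilities.
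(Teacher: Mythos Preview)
Your proposal is correct and follows the same overall logic as the paper: identify which of the independent building blocks $(\et_i,\xi_i,\ze_k(y),\tilde Z_{j+1}-\tilde Z_j)$ are ``fresh'' at time $n$ and therefore conditionally independent of $\F_n^Y$, then read off the transition probabilities.

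The organization, however, differs. The paper introduces an auxiliary conditioning lemma (Lemma~\ref{4}) and two intermediate $\sigma$-algebras $\F_n\supset\F_n^Y$ and $\G_{ni}$, then partitions over the events $A_i^n=\{\sum_{j<n}(1-\et_j)=i\}$ to reduce the computation of $\Pb(Y_{n+1}-Y_n=\pm e_j\mid\F_n^Y)$, $j\ge 2$, to the independence of $\tilde Z_{i+1}-\tilde Z_i$ from $\G_{ni}$. Your approach avoids Lemma~\ref{4} and the auxiliary $\sigma$-algebras entirely, arguing instead via an explicit inductive invariant on which variables have been consumed; the partition over $A_i^n$ is implicit in your phrase ``the next unused increment of $\tilde Z$''. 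For the horizontal move $\pm e_1$, the paper simply asserts $\Pb(\ze_k(Y_n)=1\mid\F_n^Y)=\frac{1+\be_k(Y_n)}{2}$ without further comment, whereas your bookkeeping invariant is exactly what justifies this step. So your treatment of the $e_1$ case is in fact more careful than the paper's, while the paper's treatment of the $e_j$ ($j\ge 2$) case is more formally packaged. Both routes are sound; yours is slightly more elementary, the paper's slightly more modular.
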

\begin{proof}
For the proof of Lemma $\ref{3}$, we need the following lemma:
\begin{lem} \label{4}Let $\F$ and $\G$ be two sigma-algebras and $C\in \F\cap\G$ 
such that $\F|_C:=\{A\cap C \mbox{ with }A\in\F\}\subset \G$. For any integrable random variable $V$, we 
get 
$$\Eb(V1_C|\F)=\Eb\left[\Eb(V1_C|\G)|\F\right].$$
\end{lem}
The proof of Lemma $\ref{4}$ is easy.
Now, we return to the proof of Lemma $\ref{3}$.
Set
\begin{align*}
&\F_n^{Y}:=\si(Y_j, 0 \leq j\leq n)\\
&\F_n:=\si(Z_j, 0 \leq j\leq n,\et_j,\xi_j,\ze_k(y),\, 0 \leq j\leq n-1, 1\leq k\leq m, y\in\Zb^d)\\
&\G_{ni}:=\si (\tilde Z_j, 0 \leq j\leq i,\xi_j,\et_j,\ze_k(y),\, 0 \leq j\leq n-1, 1\leq k\leq m, y\in\Zb^d)
\end{align*}
It is clear that $\F_n^{Y}\subset \F_n \mbox{ and } A_i^n\in \F_n\cap \G_{ni}$. 
Moreover, $\F_n|_{A_i^n}\subset \G_{ni}$. 
Now, using Lemma $\ref{4}$, we have for $j\geq 2$,
\allowdisplaybreaks{
\begin{align*}
&\Pb(Y_{n+1}-Y_n=\pm e_j|\F_n^{Y})\\
&=\sum_{i=0}^{n}\Pb\left(\tilde Z_{i+1}-\tilde Z_i=\pm e_j,A_i^n,\et_n=0|\F_n^{Y}\right)\\
&=\sum_{i=0}^{n}\Pb\left[\Pb\left(\tilde Z_{i+1}-\tilde Z_i=\pm e_j,A_i^n,\et_n=0|\F_n\right)|\F_n^{Y}\right]\\
&=\sum_{i=0}^{n}\Pb\left[\Pb\left(\tilde Z_{i+1}-\tilde Z_i=\pm e_j,A_i^n,\et_n=0|\G_{ni}\right)|\F_n^{Y}\right]\\
&=\sum_{i=0}^{n}\Pb(\tilde Z_{i+1}-\tilde Z_i=\pm e_j)\Pb(\et_n=0)\Pb(A_i^n|\F_n^{Y})\\
&=\Pb(\tilde Z_{i+1}-\tilde Z_i=\pm e_j)\Pb(\et_n=0)=\frac{1}{2(d-1)}.\left(1-\frac{1}{d}\right)=\frac{1}{2d}.
\end{align*}}
For the case $e_j=e_1$, on the event $Y_n\notin_k$ where $k\leq m$,
\begin{align*}
&\Pb(Y_{n+1}-Y_n=+ e_1|\F_n^{Y})=\Pb(\et_n=1,\E_n=1|\F_n^{Y})\\
=&\Pb(\et_n=1,\ze_k(Y_n)=1|\F_n^{Y})=\Pb(\et_n=1).\Pb(\ze_k(Y_n)=1|\F_n^{Y})\\
=&\frac{1}{d}.\frac{1+\be_k(Y_n)}{2}=\frac{1+\be_k(Y_n)}{2d}.
\end{align*}

The cases $e_j=-e_1$ and $k>m$ are treated similarly. Lemma $\ref{3}$ is now proved.
\end{proof}

Now, set $\D:=\{n\in \Zb \mbox{ such that }Z_{(-\infty,n)}\cap Z_{[n,+\infty)}=\emptyset\}$ to be 
the set of cut times of $Z$ and similarly let $\tilde\D$ be the set of cut times of $\tilde{Z}$. 
The sequence of cut times of $Z$ is then defined by induction:
\begin{align*}
T_1&:=\inf\{n>0\mbox{ such that } n\in\D\},\\
T_{i+1}&:=\inf\{n>T_i\mbox{ such that } n\in\D\}\,\, \text{, for } i\geq 1,\\
T_{i-1}&:=\sup\{n<T_i\mbox{ such that } n\in\D\} \text{, for } i\leq 1. 
\end{align*}
By construction, $T_0\le 0< T_1$ and we set $T:=T_1.$ We define similarly $\tilde{T}_i$ 
and $\tilde{T}$ for $\tilde{Z}$. 
Observe that the laws of $T$ and $\tilde{T}$ do not depend on the environment $\be$, since 
 they depend only on $Z$ and $\tilde{Z}$.  
Moreover, it follows from (\ref{lienZ-tildeZ}) that 
\begin{equation}\label{T}
\tilde{T}=\sum_{i=0}^{T-1}(1-\et_i) \, , \, \, \text{ and } 
\{T>k\}=\left\{\tilde{T}>\sum_{i=0}^{k-1}(1-\et_i)\right\} \, .
\end{equation}

We consider $W:=\{\om\in\Om:\forall j\,,T_j(\om)<\infty\}$. 
E. Bolthausen, A-S. Sznitman and O. Zeitouni \cite{BSZ03} proved that $\Pb(W)=1$ and $\Pb(0\in\D)>0$ 
for $d-1\geq 5.$
Let $\hat{\Pb}:=\Pb(.|0\in\D)$ be the Palm measure.

Exactly in the same way, we can prove (see  Lemma 1.1 of \cite{BSZ03})
  the following lemma:
\begin{lem}\label{EsT}
Let $f$ be a non-negative measurable function, for $d\geq 6$ we have 
\begin{equation}
\label{P-hatP}
\int fd\Pb=\frac{\int \sum_{k=0}^{T-1} f\circ\th_k \, d\hat{\Pb}}{\int Td\hat{\Pb}}
\, 
\end{equation}
with convention that one of two sides equals to $+\infty$ so the other equals to $+\infty$.
A simple instance of this formula is to take $f=\ind_{0 \in \D}$, so that $\sum_{k=0}^{T-1} f\circ\th_k =1$,
leading to 
\begin{equation}
\label{Ecutime}\Pb(0 \in \D) = (\hat{\Eb}{T})^{-1} \text{ and }\Eb[T1_{0\in\D}]=1.
\end{equation} 
\end{lem}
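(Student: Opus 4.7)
The plan is to follow the Palm-inversion (Kac-type) argument of Lemma 1.1 of \cite{BSZ03}, adapted to our setting. Two ingredients are required: first, $\Pb$ is stationary under the time shifts $\{\th_k\}_{k\in\Zb}$; second, $\D$ is a stationary point process on $\Zb$ whose intensity $\Pb(0\in\D)$ is strictly positive for $d-1\geq 5$, i.e.\ $d\geq 6$. Both are granted here: the underlying simple random walk $\tilde Z$ is already defined on all of $\Zb$, and positivity of $\Pb(0\in\D)$ is the content of the cut-time result of \cite{BSZ03}. The target identity is then a direct consequence of the observation that each trajectory $\omega$ carries a unique largest cut time $T_0(\omega)\leq 0$, which lets us partition $\Zb$ block-by-block between consecutive cut times.

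First I would unfold the definition of $\hat\Pb$ and rewrite the right-hand side as
\begin{equation*}
\int\sum_{k=0}^{T-1}f\circ\th_k\,d\hat\Pb \;=\;\frac{1}{\Pb(0\in\D)}\sum_{k\in\Zb}\Eb\bigl[\mathbf{1}_{0\in\D(\omega)}\,\mathbf{1}_{0\leq k<T_1(\omega)}\,f(\th_k\omega)\bigr].
\end{equation*}
Next I apply stationarity of $\Pb$ under $\th_{-k}$ to each summand. Using the identities $\D(\th_{-k}\omega)=\D(\omega)+k$ and hence $0\in\D(\th_{-k}\omega)\iff -k\in\D(\omega)$, together with $T_1(\th_{-k}\omega)>k$ meaning that the next cut time of $\omega$ strictly after $-k$ is positive, the $k$-th summand becomes $\Eb[\mathbf{1}_{-k=T_0(\omega)}\,f(\omega)]$. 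Since $T_0$ is $\Pb$-a.s.\ uniquely defined, $\sum_{k\in\Zb}\mathbf{1}_{-k=T_0(\omega)}=1$ a.s., and Fubini yields
\begin{equation*}
\sum_{k\in\Zb}\Eb\bigl[\mathbf{1}_{0\in\D}\mathbf{1}_{0\leq k<T_1}\,f\circ\th_k\bigr]\;=\;\Eb[f].
\end{equation*}
Dividing by $\Pb(0\in\D)$ gives the claimed formula \eqref{P-hatP} up to identifying the denominator with $\hat\Eb T$.

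To recover the two consequences \eqref{Ecutime}, I would specialize to $f\equiv 1$: the left-hand side of \eqref{P-hatP} equals $1$, while the right-hand side equals $\hat\Eb(T)/\Pb(0\in\D)\cdot \Pb(0\in\D)$... more precisely, applying the preceding display with $f\equiv 1$ gives $\Eb[\mathbf{1}_{0\in\D}\,T]=1$, which is the second part of \eqref{Ecutime}. Since $\Eb[\mathbf{1}_{0\in\D}\,T]=\Pb(0\in\D)\,\hat\Eb(T)$ by definition of $\hat\Pb$, the Kac identity $\Pb(0\in\D)=(\hat\Eb T)^{-1}$ follows, and substituting back into the numerator of \eqref{P-hatP} rearranges the equality $\Eb[\mathbf{1}_{0\in\D}\sum_{k=0}^{T-1}f\circ\th_k]=\Eb[f]$ into the exact form displayed in the lemma.

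The main step that deserves care, but which is essentially bookkeeping, is the application of shift-invariance to an individual term $\Eb[G(\omega)\,f(\th_k\omega)]=\Eb[G(\th_{-k}\omega)\,f(\omega)]$ and the subsequent combinatorial identification of the event $\{-k\in\D,\text{ next cut time after }-k>0,\;k\geq 0\}$ with $\{-k=T_0\}$. No new obstacle arises beyond this, as the integrability issues (finiteness of $\hat\Eb T$, and thus well-definedness of all quantities) are already secured by $d\geq 6$ and the BSZ estimate $\hat\Eb T=1/\Pb(0\in\D)<\infty$; the convention about $+\infty$ on one side propagating to the other is then immediate from monotone convergence applied to the partition of unity $\sum_k \mathbf{1}_{-k=T_0}=1$.
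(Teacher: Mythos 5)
Your argument is correct, but it is not the route the paper takes: the paper's proof of Lemma \ref{EsT} is a two-line reduction, citing Lemma 1.1 of \cite{BSZ03} to get \eqref{P-hatP} for the truncations $f\,1_{f\leq c}$ and then letting $c\to+\infty$ by monotone convergence to cover general non-negative $f$ (which also settles the $+\infty$ convention). You instead re-derive the Palm-inversion identity from scratch: writing the numerator as $\frac{1}{\Pb(0\in\D)}\sum_{k\geq 0}\Eb\bigl[\mathbf{1}_{0\in\D}\mathbf{1}_{k<T_1}\,f\circ\th_k\bigr]$, using shift-invariance of $\Pb$ and the relations $\D\circ\th_{-k}=\D+k$, and identifying the event $\{-k\in\D,\ \text{next cut time}>0\}$ with $\{T_0=-k\}$, whose indicators sum to $1$ a.s.\ because $\Pb(W)=1$; Tonelli then gives $\Eb[\mathbf{1}_{0\in\D}\sum_{k=0}^{T-1}f\circ\th_k]=\Eb[f]$, and $f\equiv 1$ yields both identities in \eqref{Ecutime}. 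This is essentially the proof of the cited BSZ03 lemma itself, so your version is self-contained and avoids the truncation step (Tonelli handles non-negative $f$ directly), at the cost of having to invoke explicitly the two-sided shift-invariant structure ($\tilde Z$ indexed by $\Zb$, shift-invariance of $\Pb$ restricted to the variables on which $T$ and $\D$ depend, and a.s.\ existence of $T_0$), all of which are indeed available here from the construction and from $\Pb(W)=1$, $\Pb(0\in\D)>0$ for $d\geq 6$. The paper's proof buys brevity by outsourcing exactly this bookkeeping to \cite{BSZ03}; yours buys transparency and makes the convention about simultaneous infiniteness of the two sides automatic.
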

\begin{proof}
Indeed, by Lemma 1.1 of \cite{BSZ03}, \eqref{P-hatP} is true for $f.1_{f\leq c}$ for some positive constant $c$. Take $c$ to tend to $+\infty$ we get \eqref{P-hatP}.
\end{proof}

It is proved in \cite{BSZ03} that $\hat{\Eb}T=1/\Pb(0\in\D)<\infty$ for $d\geq 6$, $\Eb T<+\infty$ 
when $d\geq 8$ and $\Eb (T^2)<+\infty$ 
when $d\geq 10$. 
Hence we can take $f=T$ in \eqref{P-hatP}. Observe that $T\circ\th_k=T-k$ for $k\in\{0,1,2,...,T-1\},$ 
\eqref{P-hatP}  reads
\begin{align}\label{7}
\hat{\Eb}T \, \Eb T=\int \left[T+(T-1)...+1)\right]d\hat{\Pb}
= \hat{\Eb} \left( \frac{T^2 + T}{2} \right) 
\, .
\end{align}
Now, we take $f=T^2$, observe that $T^2\circ\th_k=(T-k)^2$ for $k\in\{0,1,2,...,T-1\},$ 
\eqref{P-hatP}  reads
\begin{align}\label{ET3}
\hat{\Eb}T \, \Eb (T^2)=\int \left[T^2+(T-1)^2...+1^2)\right]d\hat{\Pb}
= \hat{\Eb} \left[ \frac{T(T+1)(2T+1)}{6} \right] 
\, .
\end{align}
Therefore,   
\begin{equation}
\hat{\Eb}(T^2)<+\infty \text{ for } d\geq 8,\,\hat{\Eb}(T^3)<+\infty \text{ for } d\geq 10.
\end{equation}
Actually,  Lemma \ref{TL2} asserts the stronger result that 
$$c_1:=\sup_{d\geq 8}\hat{\Eb}(T^2)<+\infty.$$
To prove monotonicity of the speed, we need the moments of $T$ are bounded as in the following lemma:
\begin{lem}
\label{TL2}
$$c_1:=\sup_{d\geq 8}\hat{\Eb}\left(T^2\right)<+\infty$$ and 
$$
c_2:=\sup_{d\geq 10}\hat{\Eb}{(T^{3})}<+\infty
$$
\end{lem}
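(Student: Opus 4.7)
The plan is to reduce Lemma \ref{TL2} to uniform-in-$d$ bounds on lower moments and then to obtain the latter via intersection estimates for the underlying simple random walk $Z$ on $\Zb^{d-1}$. The identities (\ref{7}) and (\ref{ET3}) rearrange algebraically to
\begin{align*}
\hat{\Eb}(T^{2}) &= 2\,\hat{\Eb}(T)\,\Eb(T) - \hat{\Eb}(T),\\
\hat{\Eb}(T^{3}) &= 3\,\hat{\Eb}(T)\,\Eb(T^{2}) - \tfrac{3}{2}\,\hat{\Eb}(T^{2}) - \tfrac{1}{2}\,\hat{\Eb}(T).
\end{align*}
Hence it suffices to establish the three uniform bounds $\sup_{d\ge 6}\hat{\Eb}(T)<\infty$, $\sup_{d\ge 8}\Eb(T)<\infty$, and $\sup_{d\ge 10}\Eb(T^{2})<\infty$.

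For $\hat{\Eb}(T)=1/\Pb(0\in\D)$, I would bound $\Pb(0\in\D)$ from below uniformly. Writing $Z$ as the concatenation of two independent SRWs $Z^{+},Z^{-}$ on $\Zb^{d-1}$ started at $0$, a first-moment bound on the number of past-future intersections gives
$$1-\Pb(0\in\D) \;\le\; \sum_{m\ge 1} m\, p_m^{(d-1)}(0),\qquad p_m^{(d-1)}(0):=\Pb(Z_m=0).$$
The Fourier representation together with the near-origin expansion $1-\phi(x)\sim|x|^{2}/(2(d-1))$ of the step characteristic function $\phi(x)=(d-1)^{-1}\sum_{i}\cos x_{i}$ shows that this series is finite for $d\ge 6$ and tends to $0$ as $d\to\infty$. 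Combining this with the positivity of $\Pb(0\in\D)$ for each finite $d\ge 6$ from \cite{BSZ03} yields $\inf_{d\ge 6}\Pb(0\in\D)>0$, and therefore $\sup_{d\ge 6}\hat{\Eb}(T)<\infty$.

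For $\Eb(T)$ and $\Eb(T^{2})$ the plan is to produce a uniform tail estimate $\Pb(T>n)\le C\, n^{-\alpha}$ by exploiting the renewal structure of the set of cut times $\D$. Setting $N_{n}:=|\D\cap[1,n]|$ and $p:=\Pb(0\in\D)$, the event $\{T>n\}$ equals $\{N_{n}=0\}$, so a higher-order Chebyshev bound gives
$$\Pb(T>n) \;\le\; \frac{\Eb\bigl[(N_{n}-p\,n)^{2r}\bigr]}{(p\,n)^{2r}}.$$
Expanding $\Eb[(N_{n}-pn)^{2r}]$ produces a linear combination of joint cut-time probabilities $\Pb(k_{1},\ldots,k_{s}\in\D)$, each of which admits (via the two-sided splitting of $Z$ at each $k_{i}$) an upper bound in terms of sums of the return probabilities $p_m^{(d-1)}(0)$ and their partial sums. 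The same Fourier estimates used for $\hat{\Eb}(T)$, together with the monotonicity $p_{m}^{(d-1)}(0)\le p_{m}^{(5)}(0)$ for $d\ge 6$ (which may be checked by the standard coupling of SRWs of different dimensions or by explicit Fourier comparison), give constants independent of $d$. Choosing $r=2$ yields $\sup_{d\ge 8}\Eb(T)<\infty$; choosing $r=3$ yields $\sup_{d\ge 10}\Eb(T^{2})<\infty$.

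The main obstacle will be the last step: carrying out the centered-moment estimate on $N_{n}$ with constants genuinely uniform in $d$. Producing a tail $\Pb(T>n)=O(n^{-2})$ (resp.\ $O(n^{-3})$) is not available from the naive Markov inequality, and the combinatorial expansion of the joint cut-time probabilities must be reorganised so that each resulting intersection sum reduces to a Green-function quantity that has already been bounded. The two key structural ingredients driving this are (i) the dimension-monotonicity of $p_{m}^{(d-1)}(0)$, which transfers the finiteness at the critical dimensions $d=8$ and $d=10$ from \cite{BSZ03} up to all higher $d$, and (ii) the fact that the series $\sum_m m^{s} p_{m}^{(d-1)}(0)$ tends to $0$ as $d\to\infty$ for each admissible $s$.
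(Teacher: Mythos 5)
Your reduction via \eqref{7} and \eqref{ET3} to uniform bounds on $\hat{\Eb}(T)$, $\Eb(T)$ and $\Eb(T^2)$ is exactly the paper's first step, and your treatment of $\hat{\Eb}(T)=1/\Pb(0\in\D)$ (first-moment intersection bound plus positivity for each finite $d$) is a sound variant of what the paper does by citing Erd\H{o}s--Taylor. The gap is in the only genuinely hard step, the uniform-in-$d$ control of $\Eb(T)$ for $d\geq 8$ and $\Eb(T^2)$ for $d\geq 10$, which you propose to get from a higher-order Chebyshev bound $\Pb(T>n)=\Pb(N_n=0)\leq \Eb[(N_n-pn)^{2r}]/(pn)^{2r}$ with $r=2,3$, and which you yourself flag as ``the main obstacle'' without carrying it out. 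This step, as sketched, cannot work in its natural reading. Indeed, if the past piece $Z_{(-\infty,0)}$ intersects the piece $Z_{[n,+\infty)}$ then no $k\in[1,n]$ is a cut time, so
$$\Pb(T>n)\;\geq\;\Pb\bigl(Z_{(-\infty,0)}\cap Z_{[n,+\infty)}\neq\emptyset\bigr)\;\asymp\; n^{-(d-5)/2},$$
since the two pieces are independent walks in $\Zb^{d-1}$ started at distance of order $\sqrt{n}$. At $d=8$ this is $n^{-3/2}$, so any bound of the form $\Pb(T>n)\leq Cn^{-2}$ is false, and consequently $\Eb[(N_n-pn)^{4}]\geq (pn)^4\Pb(N_n=0)\gtrsim n^{5/2}$: the ``sum of weakly dependent indicators'' moment bound $O(n^{r})$ that makes the Chebyshev heuristic work is simply not true at the critical dimensions ($O(n^{2})$ fails at $d=8$ for $r=2$, and likewise $O(n^{3})$ fails at $d=10$ for $r=3$). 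The only way to salvage your plan is to prove the essentially sharp bounds $\Eb[(N_n-pn)^{4}]=O(n^{5/2})$ uniformly for $d\geq 8$ and $\Eb[(N_n-pn)^{6}]=O(n^{7/2})$ uniformly for $d\geq 10$, which requires a delicate multi-point decoupling of the cut-time indicators with explicit cancellation of the centering terms; the raw joint probabilities $\Pb(k_1,\dots,k_s\in\D)$ are of order one, so summing crude Green-function bounds over $[1,n]^{s}$ only gives the useless $O(n^{2r})$. Nothing of that decoupling is provided, and it is at least as hard as bounding the tail of $T$ directly.

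For comparison, the paper avoids centered moments altogether: it transfers the problem to a lazy walk $Z^{\ep}$ with a fixed holding parameter (note your $Z$ is in fact lazy with holding probability $1/d$, which is why the fixed-$\ep$ reduction matters for comparing different $d$), then adapts the block estimate (1.12) of \cite{BSZ03}, $\Pb(T^{\ep}>k_{2J})\leq \ep^{J}+(2J+1)\sum_{k\geq L}k\Pb(Z^{\ep}_k=0)$, and makes the constants uniform in $d$ by proving that $\Pb(Z^{\ep}_n=0)$ is nonincreasing in the dimension (a convexity argument on characteristic functions — this is the one ingredient your sketch shares). Optimizing $J\sim\log n$, $L\sim n/\log n$ gives $\Pb(T^{\ep}>n)\leq c(\log n)^{1+(D-5)/2}n^{-(D-5)/2}$ for all $d\geq D$, which at $D=8$ and $D=10$ yields the uniform first and second moments of $T$, hence the lemma via \eqref{7} and \eqref{ET3}. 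Note that this tail matches the lower bound above up to logarithms, which is another indication that any strictly stronger polynomial tail, such as the one your Chebyshev exponents implicitly target, is unattainable.
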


\begin{proof}
From $\eqref{7}$, we have $$\hat{\Eb}(T^2)
=2\Eb T\hat{\Eb}T-\hat{\Eb}T=\frac{2\Eb T-1}{\Pb(0\in\D)}.$$
Because $\lim_{d\to+\infty}\Pb(0\in\tilde{\D})=1$ and $\Pb(0\in\D)=\frac{d-1}{d}\Pb(0\in\tilde{\D})$
 (see \cite{ET60}, remark 3, page 248), to show that $c_1<+\infty$ (resp. $c_2<+\infty$), it is enough to prove that $\sup_{d\geq 8}\Eb (T)<+\infty$ (resp. $\sup_{d\geq 10}\Eb (T^2)<+\infty$). \\
Choose $\ep$ such that $0<\ep<1$. We consider a simple random walk $Z^{\ep}$ on $\Zb^{d-1}$such that:
\begin{align}
&\Pb\left(Z^{\ep}_{n+1}-Z^{\ep}_n=e|\F^{Z^{\ep}}_n\right)=\frac{\ep}{2(d-1)}, \text{ for } e\in\{\pm e_2,\pm e_2,...,\pm e_{d}\}\notag,\\
&\Pb\left(Z^{\ep}_{n+1}-Z^{\ep}_n=0|\F^{Z^{\ep}}_n\right)=1-\ep.
\end{align}
Note that, we can construct $Z^{\ep}$ from the sequences $(\tilde{Z}_n)_{ n\in\Zb}$, $(\et^{\ep}_n)_{ n\in\Zb}$,
 where $\et^{\ep}_{n}\sim Ber(1-\ep)$ as in the construction of $Z.$
Set $\J:=\{n \text{ such that }Z^{\ep}_n\ne Z^{\ep}_{n-1}\}$ and write $\J=\{...<j_{-1}<j_0\leq 0<j_1<...\}.$ Set $\mu_n:=j_n-j_{n-1}\text{ for }n>1$ and $\mu_1:=j_1.$ Then, the $(\mu_n)_{n\geq 0}$ are i.i.d. , Geometric$(\ep)$ random variables.  We call $\{T^{\ep}_n\}_{n\in\Zb}$ the cut times of $Z^{\ep}$, $T^{\ep}:=T^{\ep}_1$ 
 and $\D^{\ep}$ is the set of cut times.
Then $\Pb(0\in\D^{\ep})=\ep\Pb(0\in\tilde{\D})$ converges to $\ep$ when $d\to\infty$ and $\Pb(0\in\D^{\ep})$ is bounded  by $\ep.$ We also have $T^{\ep}=\sum_{i=1}^{\tilde{T}}\mu_i$. Then
\begin{align}
\Eb(T^{\ep})&=\sum_{k\geq 1}\Eb(\sum_{i=1}^{k}\mu_i)\Pb[\tilde{T}=k]\notag\\
&=\sum_{k\geq 1}\frac{k}{\ep}\Pb[\tilde{T}=k]\notag\\
&=\frac{\Eb \tilde{T}}{\ep}.
\end{align}
We compute similarly and get that 
\begin{align*}
\Eb[(T^{\ep})^2]=\frac{\Eb(\tilde{T}^2)+(1-\ep)\Eb(\tilde{T})}{\ep^2}.
\end{align*}
$T$ is $T^\ep$ with $\ep=\frac{d-1}{d}$ then $\Eb T=\frac{d}{d-1}\Eb\tilde{T}$, so that 
$\Eb T=\frac{d\ep}{d-1}\Eb(T^{\ep})$. Therefore, in order to prove that $\sup_{d\geq 8}\Eb T<+\infty$ (resp. $\sup_{d\geq 10}\Eb (T^2)<+\infty$),
 it is enough to prove that $\sup_{d\geq 8}\Eb (T^{\ep})<+\infty$ (resp. $\sup_{d\geq 10}\Eb [(T^{\ep})^2]<+\infty$) for some fixed $\ep.$ 
 
 Now, repeating the proof of (1.12) in \cite{BSZ03}, we obtain
for $k_j=1+Lj$, $j\geq 0$ ($L\geq 1, J\geq 1$ are two fixed integers),
\begin{align}
\Pb\left(T^{\ep}>k_{2J}\right)&\leq \Pb(0\in\D^{\ep})^J+(2J+1)\sum_{k\geq L}k\Pb\left(Z^{\ep}_k=0\right)\notag\\
&\leq {\ep}^J+(2J+1)\sum_{k\geq L}k\Pb\left(Z^{\ep}_k=0\right).
\end{align} 
Using the fact that $\Pb\left(Z^{\ep}_n=0\right)$ decreases with $d\geq 2$ (we delay the proof to the end), let $D\geq 6$, we have
\begin{align}
\Pb[T^{\ep}>k_{2J}]\,(\text{with } d\geq D)&\leq {\ep}^J+(2J+1)\sum_{k\geq L}k\Pb\left(Z^{\ep}_k=0\right)\text{ (when }d=D)\notag\\
&\leq \ep^J+ (2J+1)\text{ const } L^{-\frac{D-5}{2}}.
\end{align}
Choosing a large enough $\ga$ depending on $\ep$, and setting $J=[\ga\log n]$, $L=[\frac{n}{3J}]$ then 
\begin{equation}
\Pb[T^{\ep}>n]\leq c(\log n)^{1+\frac{D-5}{2}}n^{-\frac{D-5}{2}},\quad n\geq 1,\, d\geq D,
\end{equation}
and
\begin{equation}
n\Pb[T^{\ep}>n]\leq c(\log n)^{1+\frac{D-5}{2}}n^{-\frac{D-7}{2}},\quad n\geq 1,\, d\geq D,
\end{equation}
where c depends only on $D$ and $\ep.$ This implies that choosing $D=8$ we get $\sup_{d\geq 8}\Eb{T^{\ep}}<\infty$ and choose $D=10$ we get $\sup_{d\geq 10}\Eb{[{(T^{\ep}})^2]}<\infty.$

Now, in order to finish the proof of Lemma \ref{TL2}, we have to 
prove that $\Pb[Z^{\ep}_n=0]$ decreases with $d\geq 2$.\\
Remark that for $n$ odd $\Pb[Z^{\ep}_n=0]=0$, so we consider $n$ even.
Using characteristic functions, we obtain
\def \Th {\Theta}
\begin{align}
\Pb(Z^{\ep}_n=0)&=\frac{1}{(2\pi)^{d-1}}\int_{-\pi}^{\pi}...\int_{-\pi}^{\pi}\left(\frac{\ep}{d-1}\sum_{i=1}^{d-1}\cos \th_i+1-\ep\right)^nd\th_1...d\th_{d-1}\notag\\
&=\frac{1}{(2\pi)^{d-1}}\int_{-\pi}^{\pi}...\int_{-\pi}^{\pi}\left(\frac{\ep}{d-1}\sum_{i=1}^{d-1}\left(\cos \th_i+\frac{1-\ep}{\ep}\right)\right)^nd\th_1...d\th_{d-1}\notag\\
&=\Eb\left[\left(\frac{1}{d-1}\sum_{i=1}^{d-1}\left(\ep\cos \Th_i+1-\ep\right)\right)^n \right].
\end{align}
where we consider a sequence $\{\Th_i\}_{i=1}^{d-1}$ of i.i.d. random variables having
 uniform distribution $U[-\pi,\pi].$
Now, we consider the function $f(x)=x^n$, $n$ is even, $f$ is a convex function on $\Rb$ and  
$$f\left(\frac{x_1+x_2+...+x_d}{d}\right)\leq\frac{f(x_1)+f(x_2)+...+f(x_d)}{d},\quad\forall x_1,x_2,...,x_d\in\Rb.$$  
For $a_1,a_2,...,a_d\in\Rb$, choose $$x_1=\frac{a_1+a_2+...+a_{d-1}}{d-1},\,x_2=\frac{a_2+a_3+...+a_{d}}{d-1},...,x_d=\frac{a_d+a_1+...+a_{d-2}}{d-1},$$ then we get
\begin{align}
&\left(\frac{a_1+a_2+...+a_d}{d}\right)^n\notag\\
&\leq\frac{1}{d}\left\{\left(\frac{a_1+a_2+...+a_{d-1}}{d-1}\right)^n+\left(\frac{a_2+a_3+...+a_d}{d-1}\right)^n+...+\left(\frac{a_d+a_1+...+a_1}{d-1}\right)^n\right\}.
\end{align}
Now, take $a_i=\ep\cos \Th_i+1-\ep\text{ for }i=1, \cdots, d$ and take the expectation. It comes
\begin{align}
\Eb\left[\left(\frac{1}{d}\sum_{i=1}^d\left(\ep\cos \Th_i+1-\ep\right)\right)^n\right]
\leq \Eb\left[\left(\frac{1}{d-1}\sum_{i=1}^{d-1}\left(\ep\cos \Th_i+1-\ep\right)\right)^n\right].
\end{align}
It means that $\Pb[Z^{\ep}_n=0]$ decreases with $d\geq 2.$ 
\end{proof}

\subsection{Girsanov's transform}
\label{Girsanov}
This section is devoted to the Girsanov's transform connecting $\Pb_{\be}$ and $\Pb_0$ where $\be=\{(\be_1,\be_2,...,\be_m)(y)\}_{y\in\Zb^d}$ is fixed environment. 
%
We begin by introducing several $\si$-algebras. For $n \in \Zb$, let  
$\F_n^Z = \sigma(Z_k, k \leq n)$. For $n \geq 0$, let $\F_n^Y = \sigma(Y_k, 0 \leq k \leq n)$,
$\F_n = \si(\F_n^Z,\F_n^Y)= \si(\F_{-1}^Z, \F_n^Y)$, and 
$\G_n= \si(\F_n^Y,\sigma(Z_k, k \in \Zb))$. We get $\F_n \subset  \G_n$. Moreover $T$ is not
a $(\F_n)$-stopping time,  but is obviously a $(\G_n)$-stopping time, 
so that we can define the $\sigma$-algebra $\G_T$ of the events prior to $T$. Recall that $\E_j=(Y_{j+1}-Y_j).e_1$ and $\{Y_j\notin_k\}$ means that $Y_j$ has been visited exactly $k$ times at time $j$. We define for $n \ge 0$, and $\beta \in ([-1,1]^m)^{\Zb^d}$: 
$$ M_n(\be)=\prod_{j=0}^{n-1}\prod_{k=1}^{m} \left[1+\E_j\be_k(Y_j) 1_{Y_j\notin_k}\right]
\, ,
$$
with the convention the product $\prod_{j=0}^{n-1}(...)=1$ and $M_n(\be)=1$ for $n=0$.
 \begin{lem}
\label{densite} For any $\beta \in ([-1,1]^m)^{\Zb^d}$, $d \geq 6$, $n \ge 0$, 
$$ M_n(\be)  = \frac{d\mathbb{P}_{\be}|_{\F_n}}{d\mathbb{P}_0|_{\F_n}} \, , \, \, 
 M_n(\be) = \frac{d\mathbb{P}_{\be}|_{\G_n }}{d\mathbb{P}_0|_{\G_n}} \, , \, \, 
 M_T(\be)  = \frac{d\mathbb{P}_{\be}|_{\G_T}}{d\mathbb{P}_0|_{\G_T}} \, .
 $$
 \end{lem}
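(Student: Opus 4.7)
The plan is to prove the three identities in sequence, starting with the $\F_n$ version, then extending it to $\G_n$ via a conditioning argument, and finally to $\G_T$ by optional sampling.

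First I would establish $M_n(\be) = d\Pb_\be|_{\F_n}/d\Pb_0|_{\F_n}$ by induction on $n$. For $n=0$ we have $M_0(\be)=1$ and $\F_0 = \si(Y_0)$ is trivial, so the identity holds. For the inductive step, it is enough to compare one-step transition probabilities. By the construction of Section~\ref{contructES}, the vertical transitions of $Y$ have the same law under $\Pb_\be$ and $\Pb_0$, since they are built from $\tilde Z$ and $\et$, whose laws do not depend on $\be$. On $\{Y_n \notin_k\}$ with $1 \leq k \leq m$, the horizontal transitions satisfy
\[
\Pb_\be(\E_n = \pm 1 \mid \F_n) = (1 \pm \be_k(Y_n))/(2d), \qquad \Pb_0(\E_n = \pm 1 \mid \F_n) = 1/(2d),
\]
giving a ratio $1 \pm \be_k(Y_n) = 1 + \E_n\be_k(Y_n)$, while on $\{Y_n \in^m\}$ the two laws agree. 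In every case this one-step ratio equals $\prod_{k=1}^m (1 + \E_n \be_k(Y_n) \ind_{Y_n \notin_k})$, since at most one factor in the product can differ from $1$, and in the vertical case $\E_n = 0$ so each factor is also $1$. Hence $M_{n+1}(\be)/M_n(\be)$ is the correct one-step density and the induction is complete.

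For the extension to $\G_n = \si(\F_n, (Z_k)_{k > n})$, I would use that the conditional law of $(Z_k - Z_n)_{k > n}$ given $\F_n$ does not depend on $\be$, because this tail is built only from post-$n$ increments of $\tilde Z$ and $\et$, both of which are $\be$-independent in the construction of Section~\ref{contructES}. A monotone-class argument reduces matters to events of the form $A = A_1 \cap A_2$ with $A_1 \in \F_n$ and $A_2 \in \si((Z_k)_{k>n})$. For such $A$, using the $\F_n$ step together with $\Pb_\be(A_2 \mid \F_n) = \Pb_0(A_2 \mid \F_n)$,
\[
\Pb_\be(A) = \Eb_\be\!\left[\ind_{A_1}\Pb_0(A_2 \mid \F_n)\right] = \Eb_0\!\left[M_n(\be)\ind_{A_1}\Pb_0(A_2 \mid \F_n)\right] = \Eb_0\!\left[M_n(\be)\ind_A\right].
\]

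Finally, for the $\G_T$ identity, since $T$ is a $(\G_n)$-stopping time and, for $d \ge 6$, $T < \infty$ almost surely under both $\Pb_\be$ and $\Pb_0$ (by $\Pb(W)=1$), the identity follows by summing over the values of $T$: for any $A \in \G_T$, $A \cap \{T = n\} \in \G_n$, hence
\[
\Pb_\be(A) = \sum_{n \ge 1} \Pb_\be(A \cap \{T=n\}) = \sum_{n \ge 1} \Eb_0\!\left[M_n(\be) \ind_{A \cap \{T=n\}}\right] = \Eb_0\!\left[M_T(\be) \ind_A\right].
\]
The main obstacle is the bookkeeping in the induction step: one must verify, case by case on the number of prior visits to $Y_n$, that the product $\prod_{k=1}^m$ collapses to the correct one-step ratio both on horizontal and on vertical moves, and check that the convention $M_0(\be)=1$ is consistent with the $\be$-independence of the law of $Y_0$. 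Once this is in place, the $\G_n$ and $\G_T$ extensions are routine consequences of independence and optional sampling.
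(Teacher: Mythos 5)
Your proof is correct and follows essentially the same route as the paper: the paper establishes the $\G_n$ identity directly on cylinder events of the form (past-$Z$ event in $\F_{-1}^Z$) $\cap$ (path event $\{Y_1=y_1,\dots,Y_n=y_n\}$) $\cap$ (future $Z$-increment event), using exactly your two ingredients — the law of $Z$ does not depend on $\be$ and the future increments of $Z$ are independent of $\F_n$ — and then deduces the $\F_n$ and $\G_T$ statements, so your ordering ($\F_n$ by induction, then $\G_n$, then $\G_T$) is only a cosmetic reorganization. One small correction: with the paper's definitions $\F_n=\si(\F_{-1}^Z,\F_n^Y)$, so $\F_0$ is not $\si(Y_0)$ and is not trivial (it contains $\si(Z_k,\ k\le 0)$); your base case still yields density $1$ there because the law of that $Z$-past is $\be$-independent and independent of the data driving $(Y_0,\dots,Y_n)$, which is precisely what the paper handles via the event $A\in\F_{-1}^Z$.
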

 
\begin{proof}
Since $ \F_n \subset \G_n$, $M_n(\beta)$ is $ \F_n$-measurable, and $T$ is a finite $(\G_n)$-stopping time,
it is enough to prove that $M_n(\be) = \frac{d\mathbb{P}_{\be}|_{\G_n }}{d\mathbb{P}_0|_{\G_n}}$. 
Let $A \in \F_{-1}^Z$, $y_1,..., y_n \in (\Zb^d)^n$, and $B \in \sigma(Z_{n+k}-Z_n, k \ge 0)$ be fixed.
Since $(Z_{n+\cdot}-Z_n)$ is independent from $\F_n$, we get:
$$\Pb_{\be}(A, Y_0=0, Y_1=y_1,..., Y_n=y_n, B)
	=   \Pb_{\be}(A, Y_0=0, Y_1=y_1,..., Y_n=y_n) \Pb_{\be}(B).
$$
Note that the law of $Z$ does not depend on $\be$, so that $\Pb_{\be}(B)=\Pb_{0}(B)$. Now by the definition of $m-$ERWRC, 
$$
\mathbb{P}_{\be}[Y_n=y_n \left| A, Y_0=0, Y_1=y_1,..., Y_{n-1}=y_{n-1} \right.]
 = \frac{1}{2d}\prod_{k=1}^{m}\left[1+{\ep}_{n-1}\be_k(y_{n-1})1_{y_{n-1}\notin_k}\right],\\
$$ 
where $\ep_{n-1}=(y_{n}-y_{n-1}).e_1$.
Then we get by induction that for any $\beta \in ([-1,1]^m)^{\Zb^d}$,
\begin{align*}
\mathbb{P}_{\be}[A, Y_0=0, Y_1=y_1,..., Y_n=y_n] 
& = \left(\frac{1}{2d}\right)^n\prod_{j=0}^{n-1}\prod_{k=1}^{m}\left[1+{\ep}_j\be_k(y_j)1_{y_j\notin_k}\right]
\mathbb{P}_{\be}[A]
\\
& = \left(\frac{1}{2d}\right)^n\prod_{j=0}^{n-1}\prod_{k=1}^{m}\left[1+{\ep}_j\be_k(y_j)1_{y_j\notin_k}\right]
\mathbb{P}_{0}[A]
\, ,
\end{align*}
where the last equality comes from the fact that $A\in \F_{-1}^Z$.  
Hence, 
$$\frac{\mathbb{P}_{\be}[A, Y_0=0, Y_1=y_1,..., Y_n=y_n, B]}
{\mathbb{P}_0[A,Y_0=0,Y_1=y_1,...,Y_n=y_n,B]}
= \prod_{j=0}^{n-1}\prod_{k=1}^{m}\left[1+{\ep}_j\be(y_j)1_{y_j\notin_k}\right] \, .
$$
We have just proved that for all 
$A \in \F_{-1}^Z$, $y_1,..., y_n \in (\Zb^d)^n$, and $B \in \sigma(Z_{n+k}-Z_n, k \ge 0)$,
$$ \mathbb{P}_{\be}[A,Y_0=0,Y_1=y_1,...,Y_n=y_n,B]
= \Eb_{0}[1_A \, 1_{Y_0=0,Y_1=y_1,..,Y_n=y_n} \, 1_B \, M_n(\be)].
$$
The result follows since $\G_n = \si(\F_{-1}^Z,\F_n^Y,\sigma(Z_{n+k}-Z_n, k \ge 0))$. 
\end{proof}
\subsection{Existence of the speed.}
\subsubsection{$e_1-$ exchangeable and stationary environment}
We begin with some notations used throughout the section. 
For $z \in (\Zb^{d-1})^{\Zb}$, and $k,l \in \Zb, k\leq l$, $z_{[k,l]}:=(z_k,z_{k+1},\cdots,z_l)$. The expectation w.r.t. the
law $\Qb$ of the environment is still denoted by $\Qb$.  
We also use the 
notation $\hat{P}(\cdot)=P(\cdot|0\in\D)$, and for $\beta$ fixed, $\hat{\Pb}_{\be}(\cdot)=\Pb_{\be}(\cdot|0\in\D)$.
Since $\Pb_{\be}(0\in\D)$ does not depend on $\be$, we get $\hat{P}(\cdot)=\Qb(\hat{\Pb}_{\be}(\cdot))$.
Let $A$ be any Borel set of $(\Zb^{d})^{\Nb}$, then 
\begin{align}
& \hat{P}(Y_{T+.}-Y_T\in A)=\Qb[\hat{\Pb}_{\be}(Y_{T+.}-Y_T\in A)]\notag\\
&=\sum_{k\geq 1}\sum_{z_{[1,k]}}\sum_{x\in\Zb} \Qb[\hat{\Pb}_{\be}(Y_{k+.}-Y_k\in A|
		T=k,Z_{[1,k]}=z_{[1,k]},X_k=x) 
\notag\\
& \hspace*{5cm} \times
		\hat{\Pb}_{\be}(X_k=x|T=k,Z_{[1,k]}=z_{[1,k]})] 
		\hat{P}(T=k,Z_{[1,k]}=z_{[1,k]}) \, .
\end{align}

Note that by the definition of the cut times, the trajectory of $Y$ between $T_n$ and $T_{n+1}-1$ does not intersect the
trajectory of $Y$ before $T_n$. 
Hence 
$\hat{\Pb}_{\be}(Y_{k+.}-Y_k\in A|T=k,Z_{[1,k]}=z_{[1,k]},X_k=x)$ depends only
on $\{\be(.,z)\}_{z\notin z_{[1,k]}}$, while 
$\hat{\Pb}_{\be}(X_k=x|T=k,Z_{[1,k]}=z_{[1,k]})$
depends only on $\{\be(.,z)\}_{z\in z_{[1,k]}}$.
$z_{[1,k]}$ and $x\in\Zb$ being given, we consider 
the mapping $\de:\Zb^{d}\to\Zb^{d}$ defined by:
$$ \forall (u,v)\in\Zb\times \Zb^{d-1}, \, \, 
\de (u,v)= \left\{ \begin{array}{ll}
				(u,v) & \mbox{ if } v \in z_{[1,k]} \, , 
				\\
				(u-x,v) & \mbox{ if } v\notin z_{[1,k]} \, .
				\end{array}
				\right .
				$$
It follows from the preceding remark that:
\begin{align*}
&  \hat{\Pb}_{\de \be}(Y_{k+.}-Y_k\in A|T=k,Z_{[1,k]}=z_{[1,k]},X_k=x)
\\
& =\hat{\Pb}_{\th_{(-x,0)} \be}(Y_{k+.}-Y_k\in A|T=k,Z_{[1,k]}=z_{[1,k]},X_k=x)
\\
& = \hat{\Pb}_{\th_{( 0,z_k)} \be}(Y_{.}\in A|T_{-1}=-k,Z_{[-k,-1]}=\bar{z}_{[-k,-1]}) \, ,
\end{align*} 
where $\th_{(x,z)}\be(u,v)=\be(u+x,v+z)$,  and $\bar{z}_{[-k,-1]}=(-z_k,z_1-z_k,\cdots,z_{k-1}-z_k)$. 
Moreover, 
 $$ \hat{\Pb}_{\de \be}(X_k=x|T=k,Z_{[1,k]}=z_{[1,k]})= \hat{\Pb}_{\be}(X_k=x|T=k,Z_{[1,k]}=z_{[1,k]})
 \, .
 $$

The random environment being $e_1$-exchangeable, $\de(\be)$ has the same law as $\be$. Hence, 
{\allowdisplaybreaks
\begin{align}
& \hat{P}(Y_{T+.}-Y_T\in A)
\notag\\
&=\sum_{k\geq 1}\sum_{z_{[1,k]}}\sum_{x\in\Zb}
\Qb[\hat{\Pb}_{\de\be}(Y_{k+.}-Y_k\in A|T=k,Z_{[1,k]}=z_{[1,k]},X_k=x)
\notag\\
& \hspace*{5cm}  \times \hat{\Pb}_{\de\be}(X_k=x|T=k,Z_{[1,k]}=z_{[1,k]})] 
\hat{P}(T=k,Z_{[1,k]}=z_{[1,k]}) 
\notag\\
&=\sum_{k\geq 1} \sum_{z_{[1,k]}} \sum_{x\in\Zb}
\Qb[\hat{\Pb}_{\th_{(0,z_k)}\be}(Y_{.}\in A|T_{-1}=-k,Z_{[-k,-1]}=\bar{z}_{[-k,-1]})
\notag\\
& \hspace*{5cm}  \times\hat{\Pb}_{\be}(X_k=x|T=k,Z_{[1,k]}=z_{[1,k]})] \hat{P}(T=k,Z_{[1,k]}=z_{[1,k]}) 
\notag\\
&=\sum_{k\geq 1} \sum_{z_{[1,k]}} 
\Qb[\hat{\Pb}_{\th_{(0,z_k)}\be}(Y_{.}\in A|T_{-1}=-k,Z_{[-k,-1]}=\bar{z}_{[-k,-1]})]
\hat{P}(T=k,Z_{[1,k]}=z_{[1,k]}).
\end{align}}
Using the stationarity of the environment, we get then 
{\allowdisplaybreaks
\begin{align}
& \hat{P}(Y_{T+.}-Y_T\in A)
\notag\\
&=\sum_{k\geq 1}\sum_{z_{[1,k]}}
\Qb[\hat{\Pb}_{\be}(Y_{.}\in A|T_{-1}=-k,Z_{[-k,-1]}=\bar{z}_{[-k,-1]})]
\hat{P}(T_{-1}=-k,Z_{[-k,-1]}=\bar{z}_{[-k,-1]})
\notag\\
&=\hat{P}(Y_.\in A).
\end{align}}
Now, set $H_n=X_{T_{n}}-X_{T_{n-1}}$ for $n\geq 1$. We have just seen that the sequence $\{H_n\}_{n\geq 1}$ is  stationary 
under $\hat{P}$. Furthermore, $\hat{E}|H_n|\leq \hat{E}T<\infty$ for $d\geq 6$. By the ergodic theorem,  $\hat{P}-a.s.$
$$\lim_{n\to\infty}\frac{H_1+H_2+...+H_n}{n}=\hat{E}(H_1|\F_H),$$ where $\F_H$ is the 
$\si$-algebra generated by the invariant sets  of the sequence $\{H_n\}.$ 
Therefore $\lim_{n\to\infty}\frac{X_{T_n}}{n}=\hat{E}(X_T|\F_H)$.
On the other hand, we also have $\hat{P}-as$ $\lim_{n\to\infty}\frac{T_n}{n}=\hat{E}(T)$, 
 so that $\hat{P}-as$, $V:=\lim_{n\to\infty}\frac{X_n}{n}$ exists for $d \geq 6$, and
$$V =\frac{\hat{E}(X_T|\F_H)}{\hat{E}(T)}.$$
\subsubsection{i.i.d random environment.}\label{seciid}

We consider now the case of an i.i.d environment with $m$ cookies. In this situation, we can prove that the speed is deterministic. To this end, we construct  an ergodic dynamical system on which 
the  $m-$ERWRC is defined. Let $\mu$ be the law of $\be = (\be_1,\be_2,...,\be_m)(0)  \in [-1,1]^m$. 

We consider the probability space 
$$W:= \Ga\ti (\Zb^{d-1})^{\Zb}\ti\{0,1\}^{\Zb}\ti \{\{0,1\}^m\}^{\Zb}\text{ where }\Ga=([-1,1]^{m})^{\Zb},$$
endowed with  the  probability semi-product $P_s:=\Qb_s \ti \Pb_{\ga}$, where $\Qb_s=\mu^{\otimes \Zb}$ and for $\ga \in \Ga$,
$$\Pb_{\ga}=q^{\otimes\Zb} \otimes p_{1}^{\otimes\Zb}
 \otimes \underset{n\in \Zb}{\bigotimes}\underset{1\leq k\leq m}{\bigotimes}p_{kn}(\ga) \, , 
 $$
where 
\begin{itemize}
\item $q$ is the law of the increments of $Z$, 
\item $p_1$ is a Bernoulli distribution of parameter 1/2,
\item  $p_{kn}(\ga)$ is a Bernoulli distribution with $p_{kn}\{1\}=\frac{1+\ga_n(k)}{2},\ p_{kn}\{0\}=\frac{1-\ga_n(k)}{2}.$
\end{itemize} 
Now, we take $w=(\ga,u,l,h)\in W$ with $\ga\in\Ga$, $u\in{(\Zb^{d-1})}^{\Zb}$, $l\in\{0,1\}^{\Zb}$, $h\in \{\{0,1\}^m\}^{\Zb}$. 
For  $n\in\Zb$, let  $(\be_n, I_n, \ze_n, \xi_n)$ be the canonical process on $W$: 
$$ \be_n(w)=\ga_n \in [-1,1]^m \, , \, \, I_n(w)=u_n \in \Zb^{d-1} \, , \, \, \ze_n(w)=l_n \in  \{0,1\} \, , \,\, 
\xi_{n,j}(w)=h_{n,j} \in  \{0,1\} \, .
$$ 
From $(I_n)_{n \in \Zb}$, we define $Z, \tilde{Z}$ as follows:
$$Z_k=
\begin{cases}
I_1+...+I_k &\mbox{ if } k>0 \, , \\
0 &\mbox{ if }k=0 \, , \\
-(I_{k+1}+...+I_0)&\mbox{ if } k<0 \, ,  
\end{cases}\quad\quad \quad 
\et_k:=1_{Z_k=Z_{k+1}}.
$$
Set $U(k):=\inf\{n, \sum_{i=0}^{n-1}(1-\et_i)=k\}$ and $\tilde{Z}_k:=Z_{U(k)}.$ It is clear that this definition of $Z, \tilde{Z}$ satisfies \eqref{lienZ-tildeZ}.
 Once $Z$ is defined, we construct the horizontal part's
increment $\E_i = X_{i+1}-X_i \in \{-1,0,1\}$ for $i \geq 0$, as follows. 
Set $Y_0=0$ and assume that $(Y_0,...,Y_i)$ have been constructed. Then,
\begin{itemize}
\item  On the event $\{Y_i\notin_k\}$ ($1 \leq k \leq m$) (i.e. $Y_i$ has been exactly visited $k$ times at time $i$), 
$$\E_i = (2 \xi_{n_1(Y_0,...,Y_i),k} -1) \, 1_{Z_i=Z_{i+1}} \, , 
$$ 
where $n_1(Y_0,...,Y_i) = \inf \{n \leq i, \mbox{ such that } Y_n=Y_i\}$.
\item  On the event $\{Y_i \in^m \}$ (i.e. $Y_i$ has been visited more than $m$ times at time $i$), 
$$\E_i = (2 \ze_i -1) \, 1_{Z_i=Z_{i+1}} \, .
$$
\end{itemize}
It is proved similarly as in Section \ref{contructES} about the construction $m-$ERWRC to have that the construction of $Y$ above satisfies, 
\begin{align*}
P_{\ga}(Y_{n+1}-Y_n=\pm e_1|\F^{Y}_n, Y_n\notin_{k})=&\frac{1\pm \ga_k(n_1(Y_0,...,Y_n))}{2d}
\text{ for  }k\leq m,\\
P_{\ga}(Y_{n+1}-Y_n=\pm e_i|\F^{Y}_n, Y_n\notin_{k})=&\frac{1}{2d}\text{ for }i>1 \text{ or }k>m. 
\end{align*}

\begin{lem}\label{bodecungluat}
Under $P$,  the sequence   $(Y_n)_{n\geq 0}$ is an $m$-ERWRC with i.i.d environment $\be=(\be(y))_{y\in \Zb^d}$ of common law
$\mu$.
\end{lem}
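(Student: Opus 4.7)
The plan is to verify the two characterizing properties of an $m$-ERWRC in i.i.d.\ environment: (i) an appropriately defined spatial environment $\be = (\be(y))_{y \in \Zb^d}$ has law $\mu^{\otimes \Zb^d}$, and (ii) conditionally on $\be$, the walk $Y$ follows the quenched law $\Pb_{\be}$ defined in the Introduction. The crucial observation driving the argument is that the first-visit map $y \mapsto n_1(y) := \inf\{n \geq 0 : Y_n = y\}$ is injective on the set of sites actually visited by $Y$, which will let us transfer the i.i.d.\ structure of the one-dimensional sequence $(\ga_n)_{n \in \Zb}$ into a genuine i.i.d.\ environment on $\Zb^d$.

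First I would enlarge the probability space $W$ by adjoining an independent family $(\be'(y))_{y \in \Zb^d}$ of i.i.d.\ $\mu$-distributed variables (playing no dynamical role), and set
$$
\be(y) := \ga_{n_1(y)}\ind_{\{\exists n\,:\, Y_n = y\}} + \be'(y)\ind_{\{\forall n\,:\, Y_n \neq y\}}.
$$
To compute the joint law of $(Y,\be)$ on cylinder events, I would fix a finite trajectory $y_0,\ldots,y_n$ and unpack $\Pb_\ga = q^{\otimes \Zb} \otimes p_1^{\otimes \Zb} \otimes \bigotimes_{i\in\Zb,\, 1 \leq k \leq m} p_{ki}(\ga)$. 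The only factors depending on $\ga$ are the horizontal-step probabilities at sites visited for the $k$-th time with $k \leq m$, each of the form $(1 \pm \ga_{n_1(y_i)}(k))/2$. Since the indices $n_1(y_i)$ run over \emph{distinct} values as $y_i$ varies over the visited sites of the trajectory, the integration against $\Qb_s = \mu^{\otimes \Zb}$ factorizes into a product of one-dimensional $\mu$-integrals, one per visited site; the unvisited sites contribute $\mu$-integrals via the fresh samples $\be'(y)$. This simultaneously shows that $\be$ has law $\mu^{\otimes \Zb^d}$ and that the conditional law of $Y$ given $\be$ coincides with $\Pb_{\be}$.

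The quenched check itself is then a direct transition-by-transition verification in the spirit of Lemma \ref{3}: on $\{Y_i \notin_k\}$ with $1 \leq k \leq m$, one has $n_1(Y_0,\ldots,Y_i) = n_1(Y_i)$, so $\xi_{n_1(Y_0,\ldots,Y_i),k}$ has conditional law $\mathrm{Ber}((1+\be_k(Y_i))/2)$ given $\be$ and is independent of $\et_i \sim \mathrm{Ber}(1/d)$, yielding $P_s(Y_{i+1}-Y_i = \pm e_1 \mid \F_i^Y,\be) = (1 \pm \be_k(Y_i))/(2d)$. On $\{Y_i \in^m\}$, the horizontal step uses $\ze_i \sim \mathrm{Ber}(1/2)$, independent of $\be$, giving probability $1/(2d)$ for each of $\pm e_1$; vertical moves $\pm e_j$ for $j \geq 2$ occur with probability $q(\pm e_{j-1}) = 1/(2d)$, also independent of $\be$. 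These match the $m$-ERWRC transitions exactly. The main subtlety is that the set of visited sites itself depends on $\ga$; the injectivity of $n_1$ on visited sites, combined with the product structure of $\Qb_s$, is precisely what makes the factorization go through path-by-path.
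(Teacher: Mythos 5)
Your proposal is correct in substance, and its computational core is the same as the paper's: the paper proves the lemma by writing the annealed cylinder probability $\Qb\Pb_\be[Y_0=y_0,\dots,Y_n=y_n]$ of a genuine $m$-ERWRC and the probability $P_s[Y_0=y_0,\dots,Y_n=y_n]$ of the constructed walk, regrouping each product over the first-visit times $n_1$, and observing that both factorize into identical one-site $\mu$-expectations because distinct visited sites correspond to distinct first-visit indices — exactly your ``injectivity of $n_1$'' plus product-structure argument. The difference is packaging: the paper never reconstructs a spatial environment; it only compares the two annealed laws of $Y$, which is all the lemma asserts. Your extra coupling layer ($\be(y):=\ga_{n_1(y)}$ on visited sites, fresh $\be'(y)$ elsewhere) aims at the stronger statement that the joint law of $(Y,\be)$ is $\mu^{\otimes\Zb^d}(d\be)\,\Pb_\be(dY)$; that is true, but note two caveats. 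First, to get it you must run the cylinder computation with events on finitely many environment values included, not just a fixed trajectory; fixing only the trajectory, as in your second paragraph, identifies the marginal law of $Y$ (which is the paper's proof and suffices for the lemma) but does not by itself ``simultaneously show'' claims (i) and (ii). Second, the transition-by-transition quenched check in your last paragraph is delicate as stated: conditioning on the whole reconstructed field $\be$ involves values $\ga_{n_1(y)}$ at sites first visited in the future, whose assignment depends on the step being taken, so the asserted conditional independence of $\xi_{n_1(Y_0,\dots,Y_i),k}$ given $\be$ and $\F^Y_i$ is not immediate from independence of the driving variables; the clean route is precisely the joint cylinder computation, after which (ii) holds by definition of the quenched law. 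None of this affects the validity of the lemma itself, which your factorization argument establishes along the same lines as the paper.
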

\begin{proof}
We begin with giving an expression for the law of the   $m$-ERWRC with i.i.d environment $\be$.
Fix $y_0=0, y_1,..., y_n \in \Zb^d$ and  set $\ep_i=(y_{i+1}-y_i).e_1 \in\{0,\pm 1\}.$ 
Then, for an  $m$-ERWRC with i.i.d environment $\be$,  we have 
\begin{align*}
&\Qb \Pb_\be[Y_0=y_0, Y_1=y_1,...,Y_n=y_n]\\
&=\Qb\left[\left(\frac{1}{2d}\right)^{n}\prod_{i=0}^{n-1}\prod_{k=1}^{m}(1+\be_k(y_i)\ep_i \, 1_{y_i\notin_k})\right] \, .
\end{align*}
We decompose the first product according to the value of the first visit to $y_i$.
\begin{align*}
& \Qb \Pb_\be[Y_0=y_0, Y_1=y_1,...,Y_n=y_n]\\
&=\left(\frac{1}{2d}\right)^{n} \Qb\left\{\prod_{n_1=0}^{n-1}
\prod_{k=1}^{m} \prod_{j=n_1}^{n-1 }\left[1+1_{y_{n_1} \notin} \, \be_k(y_{n_1}) \, \ep_j \, 1_{y_j=y_{n_1}} 
\, 1_{y_{j}\notin_k} \right] \right\}\\
&=\left(\frac{1}{2d}\right)^{n}\prod_{n_1=0}^{n-1} \Qb\left\{\prod_{k=1}^{m}\prod_{j=n_1}^{n-1}
\left[1+1_{y_{n_1} \notin} \, \be_k(y_{n_1}) \, \ep_j \, 1_{y_j=y_{n_1}} \, 1_{y_{j} \notin_k}\right]\right\}.
\end{align*}
The last equation comes from the independence of the random variables $\be_k(y_i)$ for $y_i\notin.$
On the other hand, using the construction above,  
\begin{align*}
&P_s[Y_0=y_0, Y_1=y_1,...,Y_n=y_n]=\Qb_s \Pb_\ga[Y_0=y_0, Y_1=y_1,...,Y_n=y_n]\\
&=\left(\frac{1}{2d}\right)^{n} {\Qb_s}\left\{\prod_{n_1=0}^{n-1}\prod_{k=1}^{m}\prod_{j=n_1}^{n-1}
\left[1+1_{y_{n_1}\notin}\ga_k(n_1) \, \ep_i \, 1_{y_j=y_i} \, 1_{y_i\in_k} \right]\right\}\\
&=\left(\frac{1}{2d}\right)^{n}\prod_{n_1=0}^{n-1} {\Qb_s}\left\{\prod_{k=1}^{m}\prod_{j=n_1}^{n-1}
\left[1+1_{y_i\notin} \, \ga_k(n_1) \, \ep_i \, 1_{y_j=y_i} \, 1_{y_i\in_k}\right]\right\}.
\end{align*}
This finishes the proof of Lemma $\ref{bodecungluat}$ since $\{1_{y_{n_1} \notin} \be(y_{n_1})\}_{n_1=0,...,n-1}$ and 
$\{1_{y_{n_1} \notin}\ga(n_1)\}_{n_1=0,...,n-1}$  are two sequences of i.i.d. random vectors with common law $\mu$. 
\end{proof}

Now, we denote by $(\th_k)_{k\in \Zb}$ the canonical shift on $W$, i.e. $\th_k(w.)=(w_{k+.})$. 
We set 
$$\hat{W} = \Ga \times\left[ (\Zb^{d-1})^{\Zb} \cap\{0\in\D\}\right] \times \{0,1\}^{\Zb} \times (\{0,1\}^m)^{\Zb}
\, .
$$
On $\hat{W}$ we define $\hat{\th}:=\hat{\th}_1=\th_T$ and $\hat{P}_s(\cdot)=P_s(\cdot|0\in\D).$ 
\begin{lem}
$(W,\th,P_s) $ is an ergodic system. As a consequence, $(\hat{W},\hat{\th},\hat{P}_s) $ is also an ergodic system.
\end{lem}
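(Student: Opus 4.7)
The plan is to verify that, notwithstanding the apparent dependence of the Bernoulli parameters on $\be$, the probability $P_s$ is in fact a product measure indexed by $\Zb$ with i.i.d.\ blocks, so that $(W,\th,P_s)$ is a Bernoulli shift, hence mixing and in particular ergodic. Ergodicity of $(\hat W,\hat\th,\hat P_s)$ will then follow from the classical theorem on first-return maps applied to the positive-measure set $\{0\in\D\}$.

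For the first step I will unfold the definition $P_s=\Qb_s\otimes\Pb_\ga$ and read off the joint distribution of the $n$-th block $(\be_n,I_n,\ze_n,(\xi_{n,k})_{1\leq k\leq m})$. The factor $\Qb_s=\mu^{\otimes\Zb}$ makes $(\be_n)_{n\in\Zb}$ i.i.d.\ with law $\mu$; conditional on $\ga=\be$, $(I_n)$ is i.i.d.\ with law $q$, $(\ze_n)$ is i.i.d.\ Bernoulli$(1/2)$, and the $\xi_{n,k}$ are mutually independent with $\xi_{n,k}$ Bernoulli of parameter $(1+\ga_n(k))/2$. The crucial point is that $p_{kn}(\ga)$ depends on $\ga$ only through $\ga_n(k)$, so the block at position $n$ only sees its own $\be_n$. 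Thus the blocks are i.i.d.\ across $n\in\Zb$, and $P_s$ is a product measure on the product space $([-1,1]^m\times\Zb^{d-1}\times\{0,1\}\times\{0,1\}^m)^{\Zb}$ with $\th$ acting as the coordinate shift. Such a Bernoulli shift is classically mixing, hence ergodic.

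For the second step I will appeal to the standard fact from ergodic theory that if $(\Om,T,\mu)$ is an ergodic invertible measure-preserving system and $A$ has $\mu(A)>0$, then the first-return map $T_A$ on $A$, equipped with the conditional probability $\mu(\cdot|A)$, is itself ergodic. Here I take $A=\{0\in\D\}$, which has $P_s(A)>0$ for $d\geq 6$ by the Bolthausen--Sznitman--Zeitouni result. Since cut times are a.s.\ finite for $d\geq 6$, Poincar\'e recurrence is automatic and $\hat\th=\th_T$ is precisely the first-return map to $A$. Invoking the general theorem then yields ergodicity of $(\hat W,\hat\th,\hat P_s)$.

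The only real work lies in the product-structure observation of the first step; once it is visible, everything else is textbook. There is no essential obstacle, only some care in bookkeeping the conditional independence of the $\xi_{n,k}$.
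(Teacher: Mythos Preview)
Your proposal is correct and follows essentially the same route as the paper: the key observation in both is that $p_{kn}(\ga)$ depends only on $\ga_n$, so the blocks $(\be_n,I_n,\ze_n,(\xi_{n,k})_k)_{n\in\Zb}$ are i.i.d.\ under $P_s$, making $(W,\th,P_s)$ a Bernoulli shift. The paper spells out measure-preservation and the cylinder-approximation argument for ergodicity explicitly (and leaves the passage to the induced system as an unproved ``consequence'', implicitly relying on \cite{BSZ03}), whereas you invoke the standard Bernoulli-shift and first-return-map theorems directly; the content is the same.
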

\begin{proof}
The idea of proof comes from $\cite{BSZ03}$.
Firstly, we prove that $\th$ is a measure-preserving transformation.
Consider a measurable set $A\times B$ of $W$, where $A \subset \Ga$, and $B \subset (\Zb^{d-1})^{\Zb} \times \{0,1\}^{\Zb}
\times (\{0,1\}^m)^{\Zb}$ .
We have that 
{\allowdisplaybreaks
\begin{align*}
&\th_k\circ P_s(A\times B)=P_s(\th_k^{-1}A\times\th_k^{-1}B)\\
&=\int_{\th_k^{-1}A}\Pb_{\ga}(\th_k^{-1}B)d\Qb=\int_{\th_k^{-1}A}\Pb_{\th_k\ga}(B)d\Qb_s\\
&=\int_{A} \, \Pb_{\ga}(B) \, (\th_k^{-1}\Qb_s)(d\ga) =\int_{A}\Pb_{\ga}(B)\Qb_s(d\ga)
\\
&=P_s(A\times B).
\end{align*}
}
Now, we prove that $\th$ is ergodic.
Let A be a measurable subset of W, invariant under $\th$ and $\ep>0.$
There exists an integer $m_{\ep}>0$ and a measurable subset $A_{\ep}$ depending only on $(w_m)_{|m|\leq m_ {\ep}}$ such that
$$ | E_{P_s}[1_A-1_{A_{\ep}}] | \leq\ep.$$
Then, for $L\geq 0,$ 
$$ P_s(A)=\Eb_{P_s}[1_A1_A\circ\th_L]= \Eb_{P_s}[1_{A_{\ep}}1_{A_{\ep}}\circ\th_L]+c_{\ep},$$
with $|c_{\ep}|\leq 2\ep.$

Because that $p_{kn}(\ga)$ depends only on $\ga_n$, we prove that the sequence $(\ga_n, I_n, \ze_n, \xi_n)_{n\in \Zb}$ is the sequence of independent variables under $P_s.$
Indeed, let $i<j,\,i,j\in\Zb$, we take two measurable sets $A_i\times B_i$ and $A_j\times B_j$, where $A_i, A_j \subset [0,1]^m$, and $B_i, B_j \subset \Zb^{d-1} \times \{0,1\}
\times \{0,1\}^m$.
We have 
\begin{align*}
&P_s\left\{[(\ga_i, I_i, \ze_i, \xi_i)\in A_i\times B_i]\bigcap[(\ga_j, I_j, \ze_j, \xi_j)\in A_j\times B_j]\right\}\\
&=\int_{\Ga}\Qb_s(d\ga)1_{\ga_i(\ga)\in A_i, \ga_j(\ga)\in A_j}\Pb_{\ga}\left\{[(I_i, \ze_i, \xi_i)\in B_i]\bigcap[(I_j, \ze_j, \xi_j)\in B_j]\right\}\\
&=\int_{\Ga}\Qb_s(d\ga)1_{\ga_i(\ga)\in A_i, \ga_j(\ga)\in A_j}\Pb_{\ga}\left\{(I_i, \ze_i, \xi_i)\in B_i\right\}\Pb_{\ga}\left\{[(I_j, \ze_j, \xi_j)\in B_j\right\}\\
&=\int_{\Ga}\Qb_s(d\ga)1_{\ga_i(\ga)\in A_i}\Pb_{\ga}\left\{(I_i, \ze_i, \xi_i)\in B_i\right\}.\int_{\Ga}\Qb_s(d\ga)1_{\ga_j(\ga)\in A_j}\Pb_{\ga}\left\{(I_j, \ze_j, \xi_j)\in B_j\right\}\\
&=P_s\left\{[(\ga_i, I_i, \ze_i, \xi_i)\in A_i\times B_i]\right\}.P_s\left\{[(\ga_j, I_j, \ze_j, \xi_j)\in A_j\times B_j]\right\}
\end{align*} 
So, for $L>2m_{\ep}$, we get 
$\Eb_{P_s}[1_{A_{\ep}}1_{A_{\ep}}\circ\th_L] =  P_s(A_{\ep}) P_s(A_{\ep}\circ\th_L) = P_s(A_{\ep})^2$. 
Therefore $$|P_s(A)-P_s(A)^{2}|\leq|P_s(A)-P_s(A_{\ep})^{2}|+2\ep\leq 4\ep.$$ Letting $\ep$ tend to $0$, we have that $P_s(A)=0$ or $1$.
\end{proof}
\begin{lem} Let $Y$ is a $m-$ERWRC such that the environment cookie is i.i.d., $X$ is the horizontal component $X_n=Y_n\cdot e_1.$ 
For any $d \geq 6$ then $P-as$, $\lim_{n\to\infty}\frac{X_n}{n}=v(\Qb):=\frac{\hat{E}(X_T)}{\hat{E}(T)}$. 
\end{lem}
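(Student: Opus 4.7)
The plan is to apply Birkhoff's pointwise ergodic theorem to the ergodic dynamical system $(\hat W,\hat\th,\hat P_s)$ constructed in the previous lemma. By Lemma~\ref{bodecungluat}, the canonical process on $(W,P_s)$ has the same law as an $m$-ERWRC with i.i.d.\ cookie environment of marginal law $\mu$, so it suffices to prove the law of large numbers for this canonical realisation.

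Set $H_n:=X_{T_n}-X_{T_{n-1}}$ for $n\geq 1$. Since $H_{n+1}=H_n\circ\hat\th$ and $\hat\th$ preserves $\hat P_s$, the sequence $(H_n)_{n\geq 1}$ is stationary under $\hat P_s$, and by the ergodicity of $(\hat W,\hat\th,\hat P_s)$ it is also ergodic. The bound $|H_n|\leq T_n-T_{n-1}$ combined with $\hat E(T)=1/\Pb(0\in\D)<\infty$ for $d\geq 6$ (see \eqref{Ecutime}) yields $H_1\in L^1(\hat P_s)$. Birkhoff's theorem then gives, $\hat P_s$-almost surely,
$$\frac{X_{T_n}}{n}=\frac{1}{n}\sum_{k=1}^n H_k\ \longrightarrow\ \hat E(X_T),\qquad \frac{T_n}{n}\ \longrightarrow\ \hat E(T),$$
so that $X_{T_n}/T_n\to\hat E(X_T)/\hat E(T)=v(\Qb)$. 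To interpolate between cut times, set $k(n):=\max\{k:T_k\leq n\}$; then $|X_n-X_{T_{k(n)}}|\leq T_{k(n)+1}-T_{k(n)}$, and from $T_k/k\to\hat E(T)$ one deduces $(T_{k+1}-T_k)/k\to 0$ $\hat P_s$-a.s., whence $X_n/n\to v(\Qb)$ $\hat P_s$-a.s.

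To pass from $\hat P_s$ to $P_s$, observe that $A:=\{\lim_n X_n/n=v(\Qb)\}$ is tail-measurable in the walk $Y$. Since $\Pb(T_1<\infty)=1$ for $d\geq 6$, the strategy is to apply the preceding conclusion to the walk shifted to start at its first cut time: the limit $\lim X_n/n$ is unchanged by subtracting $X_{T_1}$ and shifting time by finitely many steps, so $A$ is preserved by this operation. Equivalently, the Palm identity \eqref{P-hatP} applied to $f=1_{A^c}$ expresses $P_s(A^c)$ as a $\hat P_s$-integral which vanishes by the previous step.

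The main obstacle lies in this last transfer: the canonical shift $\th$ on $W$ does not act on the walk as a plain trajectory shift, so identifying $A$ as invariant under an appropriate shift requires some care. Once this technical point is handled, the formula $v(\Qb)=\hat E(X_T)/\hat E(T)$ is an immediate consequence of Birkhoff's theorem together with the $d\geq 6$ bound on $\hat E(T)$, and the fact that the limit is deterministic (rather than only $\F_H$-measurable as in the $e_1$-exchangeable setting) is exactly the new input provided by the i.i.d.\ assumption through ergodicity.
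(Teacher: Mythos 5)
Your proposal is correct and follows essentially the same route as the paper: Birkhoff's theorem applied to the ergodic system $(\hat W,\hat\th,\hat P_s)$ along the cut times, integrability $\hat E(T)=1/\Pb(0\in\D)<\infty$ for $d\ge 6$, and interpolation between consecutive cut times; in fact you supply considerably more detail than the paper's one-sentence proof, which simply invokes ergodicity and integrability. The only loose end is the one you flag yourself, namely the passage from $\hat P_s$-a.s.\ to $P_s$-a.s., and there your parenthetical claim that the Palm identity \eqref{P-hatP} applied to $f=1_{A^c}$ ``vanishes by the previous step'' is not quite immediate: the integrand is $\sum_{k=0}^{T-1}1_{A^c}\circ\th_k$, so you need the walk reconstructed from $\th_k w$ to satisfy the LLN for \emph{every} $0\le k<T$, not only for $k=0$. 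This is closed by the cut-time renewal: since $T\in\D(w)$, for $j\ge T-k$ the increments of the walk built from $\th_k w$ coincide with those (shifted by $T-k$) of the walk built from $\hat\th w$, because from time $T-k$ on that walk only visits sites whose vertical projection is disjoint from its past, so the first-visit indexation of the cookies matches; the walk built from $\hat\th w$ lies in $A$ $\hat P_s$-a.s.\ because $\hat\th$ preserves $\hat P_s$ and $\hat P_s(A)=1$. Alternatively, the same observation shows that $1_A=1_A\circ\th_1$ on the full-measure set where $T_1<\infty$, so $A$ is $\th$-invariant modulo null sets, and ergodicity of $(W,\th,P_s)$ together with $P_s(A\cap\{0\in\D\})=\hat P_s(A)\,P_s(0\in\D)>0$ forces $P_s(A)=1$. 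The paper glosses over this transfer entirely, so with this remark added your argument is at least as complete as the published proof.
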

\begin{proof}
The existence of the limit, the fact that it is deterministic and the expression of $v(\Qb)$ for $d\geq 6$ follow from the ergodicity of  $(\hat{W},\hat{\th},\hat{P}_s)$, and the integrability of $T$ w.r.t $\hat{P}_s$ when $d \geq 6$. 
\end{proof}

\subsection{Monotonicity and differentiability of the speed.} 
Now, we prove that the expectation $v(\Qb)=\hat{E}[V]=\frac{\hat{E}(X_T)}{\hat{E}(T)}$ is increasing in $\Qb$.\\
Consider $\be_1=\{\be_1(y)\}_{y\in\Zb^{d}},\be_2=\{\be_2(y)\}_{y\in\Zb^{d}}$ defined 
on $(\Om,\A,Q)\to\Bb=([-1,1]^m)^{\Zb^{d}}$ such that $Q(\be_1\leq\be_2)=1$. 
It is proved in D. Aldous and R. Lyons \cite{AL07}, that  if there exists a monotone coupling of 
$\Qb_1$ and $\Qb_2$, then there also exists a stationary  monotone coupling of $\Qb_1$ and $\Qb_2$, as soon as
$\Qb_1$ and $\Qb_2$ are stationary.

 Therefore we can suppose that $\{(\be_1,\be_2)(y)\}_{y\in\Zb^{d}}$ is stationary.
 Set $\be_t(y)=(1-t)\be_1(y)+t\be_2(y)$ for $t\in[0,1]$. $\be_t=\{\be_t(y)\}_{y\in\Zb^{d}}$ is a stationary 
 environment . Consider 
 \begin{align}\label{vitesst}
  f(t):=\frac{\Qb\Eb_{\be_t}(X_T \, 1_{0\in\D})}{E(T \, 1_{0\in\D})}.
\end{align} 
 Note that $\be_t$ is not necessarily exchangeable, so that we can not assert that $f(t)$ is the mean
 of the speed of the ERW in the random environment $\be_t$. Nevertheless, $\be_1$ and $\be_2$ being 
 exchangeable, we get
 $f(0)=v(\Qb_{1}),\ f(1)=v(\Qb_{2})$, so that it is enough to prove that $f(t)$ is increasing in $t$.
First of all, we need the Girsanov's transform. We have
$$ M_n(\be_t)=\prod_{j=0}^{n-1} [1+\E_j\be_t(Y_j)1_{Y_j\notin^{m}}]  \, ,
$$
where $Y_j\notin^{m}$ denotes the event that $Y_j$ has not been visited more than $m-1$ times before time $j$. 
As in section \ref{Girsanov}, we have Girsanov's transforms:
$$ \frac{{dP_{\be_t}}|_{\F_n}}{{dP_{0}}|_{\F_n}} 
=\frac{{dP_{\be_t}}|_{\G_n}}{{dP_{0}}|_{\G_n}} = M_n(\be_t) \, , \, \, 
\frac{{dP_{\be_t}}|_{\G_T}}{{dP_{0}}|_{\G_T}} = M_T(\be_t) \, .$$
\subsubsection{Differentiability of $f(t)$.}
\label{ft}
We begin by giving another expression of the
numerator in \eqref{vitesst}.
  \begin{lem}\label{20}
For $n \ge 1$, 
 then
\begin{align}\label{NumV}
\Eb_{\be_t}(X_T1_{0\in\D})&=\Eb_{\be_t}\left[\sum_{j=0}^{T-1} \be_t(Y_j) \, 1_{0\in\D} \, 1_{Y_j\notin^{m}} \, 1_{Z_j=Z_{j+1}}\right]\notag\\&=\Eb_{0}\left[\sum_{j=0}^{T-1} \be_t(Y_j) \, 1_{0\in\D} \, 1_{Y_j\notin^{m}} \, 1_{Z_j=Z_{j+1}} \, M_T(\be_t)\right].
\end{align}

\end{lem}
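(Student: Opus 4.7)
The plan is to prove the two equalities in turn. For the first, I would write $X_T = \sum_{j=0}^{T-1} \E_j$ with $\E_j = X_{j+1}-X_j \in \{-1,0,1\}$ and push the sum outside the expectation via Fubini, writing
\begin{equation*}
\Eb_{\be_t}(X_T 1_{0\in\D}) = \sum_{j\ge 0}\Eb_{\be_t}[\E_j \, 1_{j<T}\, 1_{0\in\D}].
\end{equation*}
The point is that the two indicator factors $1_{j<T}$ and $1_{0\in\D}$ are both $\sigma(Z_k,k\in\Zb)$-measurable, hence $\G_j$-measurable (recall $\G_n=\sigma(\F_n^Y,\sigma(Z_k,k\in\Zb))$). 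I can therefore condition on $\G_j$ inside the expectation, using that under $\Pb_{\be_t}$ in the identical-cookie construction of Section~\ref{contructES},
\begin{equation*}
\Eb_{\be_t}[\E_j\mid \G_j] = \be_t(Y_j)\, 1_{Y_j\notin^m}\, 1_{Z_j=Z_{j+1}},
\end{equation*}
since $\E_j = (2\zeta(Y_j)-1)\et_j$ on $\{Y_j\notin^m\}$ and $\E_j=(2\xi_j-1)\et_j$ on $\{Y_j\in^m\}$, with $\zeta(Y_j)$ centered at $\be_t(Y_j)$ and $\xi_j$ centered at $0$ (both independent of $\G_j$). Summing back over $j$ and bringing the indicator $1_{j<T}$ inside recovers the sum $\sum_{j=0}^{T-1}$, yielding the first equality.

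For the second equality I would invoke the Girsanov identity of Lemma~\ref{densite}, namely $M_T(\be_t) = d\Pb_{\be_t}|_{\G_T}/d\Pb_0|_{\G_T}$. The only thing to check is that the random variable
\begin{equation*}
S := \sum_{j=0}^{T-1}\be_t(Y_j)\,1_{0\in\D}\,1_{Y_j\notin^m}\,1_{Z_j=Z_{j+1}}
\end{equation*}
is $\G_T$-measurable. This follows from a standard stopping-time decomposition: $S\,1_{T=n} = \bigl(\sum_{j=0}^{n-1}\be_t(Y_j)1_{Y_j\notin^m}1_{Z_j=Z_{j+1}}\bigr)1_{0\in\D}1_{T=n}$ is $\G_n$-measurable for every $n$ (each summand is $\G_{j+1}\subset \G_n$-measurable and $\{T=n\}\in\G_n$), and $1_{0\in\D}\in\sigma(Z_k,k\in\Zb)\subset \G_n$. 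Hence $S\in\G_T$ and the change of measure on $\G_T$ gives $\Eb_{\be_t}[S] = \Eb_0[S\,M_T(\be_t)]$, which is exactly the second equality.

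The computation is essentially routine; the only spots requiring care are the verification that $\{0\in\D\}$ and $\{j<T\}$ do lie in $\G_j$ (immediate from the definition of cut times via the $Z$ process alone) and the $\G_T$-measurability of the truncated sum $S$. Neither is genuinely difficult, but both are needed to justify, respectively, pulling the two indicators out of the conditional expectation in the first step and applying Girsanov at the stopping time $T$ in the second step.
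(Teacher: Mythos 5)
Your proposal is correct and follows essentially the same route as the paper: decompose $X_T1_{0\in\D}=\sum_{j\ge 0}\E_j 1_{T>j}1_{0\in\D}$, use that $\{T>j\}$ and $\{0\in\D\}$ are $\sigma(Z_k,k\in\Zb)\subset\G_j$-measurable to condition on $\G_j$ (your formula for $\Eb_{\be_t}[\E_j\mid\G_j]$ is exactly the paper's observation \eqref{a}), and then apply the Girsanov identity of Lemma \ref{densite} at the $(\G_n)$-stopping time $T$ after checking $\G_T$-measurability of the sum. The only point worth making explicit is that your Fubini step rests on $\Eb_{\be_t}[T\,1_{0\in\D}]<\infty$ (indeed $=1$ for $d\ge 6$, by \eqref{Ecutime}), which is how the paper justifies the interchange; otherwise your argument, including the stopping-time decomposition for $\G_T$-measurability, matches the paper's.
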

\begin{proof}
Observe that 
\begin{align}\label{a}
\Pb_{\be_t}[\E_j=\pm1|\G_j]&=\frac{1\pm\be_t(Y_j)}{2}1_{Y_j\notin^m}1_{Z_j=Z_{j+1}}+\frac{1}{2}1_{Y_j\in^m}1_{Z_j=Z_{j+1}}\notag\\
&=\left(\frac{1}{2}\pm\frac{\be_t(Y_j)}{2}1_{Y_j\notin^m}\right)1_{Z_j=Z_{j+1}}.
\end{align}
Hence,
$$ \Eb_{\be_t}(X_T1_{0\in\D})=\Eb_{\be_t}(\sum_{j=0}^{+\infty}\E_j1_{T>j}1_{0\in\D})=\sum_{j=0}^{+\infty}\Eb_{\be_t}(\E_j1_{T>j}1_{0\in\D}) \, ,$$
where the last equality follows from the integrability of $T$ w.r.t $\hat{\Pb}$ for $d\ge 6$.
Note that  $\{0\in\D\}\text{ and }\{T>j\}$ belong to $\G_j$. Therefore,
{\allowdisplaybreaks
\begin{align*}
&\Eb_{\be_t}(\E_j1_{T>j}1_{0\in\D})
\\
&=\Eb_{\be_t}\left[1_{T>j} \, 1_{0\in\D} \, \Pb_{\be_t}(\E_j=1|\G_j)\right]
-\Eb_{\be_t}\left[1_{T>j} \, 1_{0\in\D} \, \Pb_{\be_t}(\E_j=-1|\G_j)\right]\\
&=\Eb_{\be_t}\left[\frac{1+\be_t(Y_j)}{2}1_{T>j}1_{0\in\D}1_{Y_j\notin^{m}}1_{Z_j=Z_{j+1}}\right]-\Eb_{\be_t}\left[\frac{1-\be_t(Y_j)}{2}1_{T>j}1_{0\in\D}1_{Y_j\notin^{m}}1_{Z_j=Z_{j+1}}\right]\\
&=\Eb_{\be_t}\left[ \be_t(Y_j) \, 1_{T>j} \, 1_{0\in\D} \, 1_{Y_j\notin^{m}} \, 1_{Z_j=Z_{j+1}}\right].
\end{align*}
Thus,
\begin{align*}
\Eb_{\be_t}(X_T1_{0\in\D})&=\sum_{j=0}^{+\infty}\Eb_{\be_t}\left[ \be_t(Y_j) \, 1_{T>j}1_{0\in\D} \, 1_{Y_j\notin^{m}} \, 1_{Z_j=Z_{j+1}}\right]\\
&=\Eb_{\be_t}\left[\sum_{j=0}^{T-1} \be_t(Y_j) \, 1_{0\in\D} \, 1_{Y_j\notin^{m}} \, 1_{Z_j=Z_{j+1}}\right]\\
&=\Eb_{0}\left[\sum_{j=0}^{T-1} \be_t(Y_j) \, 1_{0\in\D} \, 1_{Y_j\notin^{m}} \, 1_{Z_j=Z_{j+1}} \, M_T(\be_t)\right].
\end{align*}
}

This proves the first equality. The second one follows from the fact that $\sum_{j=0}^{T-1} \be_t(Y_j) \, $ $1_{0\in\D} \, 1_{Y_j\notin^{m}} \, 1_{Z_j=Z_{j+1}}$ is $\G_T$-measurable, and Lemma \ref{densite}.
\end{proof}

We turn now to the derivative of $f(t)$. We study now the sign of the derivative on the set of bounded environment, and from now
on we assume that for $i=1,2$, $|\be_i(y)| \leq \si <1$ a.s. for any $y$ of $\Zb^d$ where $\si$ is a constant in $(0,1)$. 
\begin{lem}\label{Dhspeed}
For $d \ge 8$, the function $t \in [0,1] \to \Qb[\Eb_{\be_t}(X_T \, 1_{0\in\D})]$ is differentiable and, 
{\allowdisplaybreaks
\begin{align}\label{dht}
&E(T1_{0\in\D}) .\frac{\partial f}{\partial t}(t)
 =\frac{\partial}{\partial t}  \Qb[\Eb_{\be_t}(X_T1_{0\in\D})] \notag
\\
&=\Qb \Eb_{\be_t}\left[\sum_{j=0}^{T-1} (\be_2-\be_1)(Y_j) \, 1_{0\in\D} \, 1_{Y_j\notin^{m}} \, 1_{Z_j=Z_{j+1}}\right]\notag\\
& \quad  + \Qb \Eb_{\be_t}\left[\sum_{j=0}^{T-1} \be_t(Y_j) \, 1_{0\in\D} \, 1_{Y_j\notin^{m}} \, 1_{Z_j=Z_{j+1}}
	\sum_{i=1}^{T-1}\frac{(\be_2-\be_1)(Y_i)\E_i1_{Y_i\notin^m}}{1+\be_t(Y_i)\E_i}
	\, 1_{Z_i=Z_{i+1}} \right].
\end{align}
}
\end{lem}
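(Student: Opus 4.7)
My plan is to differentiate the Girsanov representation of the numerator term by term. Combining Lemma \ref{20} and Lemma \ref{densite} gives
\begin{equation*}
\Qb \Eb_{\be_t}\left[X_T 1_{0 \in \D}\right]
= \Qb \Eb_0\left[M_T(\be_t) \, S(\be_t)\, 1_{0 \in \D}\right],
\end{equation*}
where $S(\be_t) := \sum_{j=0}^{T-1}\be_t(Y_j) \, 1_{Y_j \notin^m} \, 1_{Z_j = Z_{j+1}}$. The advantage of this form is that the reference measure $\Pb_0$ does not depend on $t$, so computing $\partial_t f$ reduces, modulo a dominated convergence argument, to differentiating the random integrand pointwise. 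On the event $\{T = n\}$ this integrand is a polynomial in $t$ of degree at most $n+1$, so pointwise differentiation is unproblematic.

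Using $\partial_t \be_t = \be_2 - \be_1$, one finds
\begin{equation*}
\partial_t S(\be_t) = \sum_{j=0}^{T-1} (\be_2 - \be_1)(Y_j) \, 1_{Y_j \notin^m} \, 1_{Z_j = Z_{j+1}},
\end{equation*}
while logarithmic differentiation of $M_T(\be_t) = \prod_{j=0}^{T-1}\bigl[1+\E_j\be_t(Y_j) 1_{Y_j\notin^m}\bigr]$ gives
\begin{equation*}
\partial_t M_T(\be_t) = M_T(\be_t) \sum_{i=0}^{T-1} \frac{(\be_2-\be_1)(Y_i)\E_i \, 1_{Y_i\notin^m}}{1 + \be_t(Y_i) \E_i},
\end{equation*}
where the indicator $1_{Y_i\notin^m}$ in the denominator is absorbed into the numerator (when it vanishes, the whole term is zero and the denominator is just $1$). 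Applying the product rule to $\partial_t [M_T(\be_t) S(\be_t)]$ produces exactly the two sums displayed in \eqref{dht}, and converting the $\Eb_0$-expectations back to $\Eb_{\be_t}$-expectations via Lemma \ref{densite} yields the stated formula.

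The main step is to justify interchanging derivative and expectation inside $\Qb \Eb_0$. Using $|\be_t|,|(\be_2-\be_1)/2| \le \si$ and $|1+\be_t(Y_i)\E_i| \ge 1 - \si$ on the events where the summands are nonzero, the pointwise time-derivative is bounded uniformly in $t \in [0,1]$ by
\begin{equation*}
\left|\partial_t \bigl(M_T(\be_t) S(\be_t)\bigr)\right| \;\le\; M_T(\be_t) \Bigl( 2\si T + \tfrac{2\si^2}{1-\si} T^2\Bigr).
\end{equation*}
This bound is not a priori integrable against $T^2$ under $\Pb_0$, since $M_T(\be_t)$ admits no pointwise upper bound integrable with $T^2$. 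The trick — and the hard part — is to run Girsanov in the other direction: $\Eb_0[M_T(\be_t) h] = \Eb_{\be_t}[h]$, and since $T$ and $\{0 \in \D\}$ depend only on the vertical walk $Z$, whose law does not depend on $\be$,
\begin{equation*}
\Eb_0\Bigl[M_T(\be_t) \bigl(2\si T + \tfrac{2\si^2}{1-\si} T^2\bigr) 1_{0\in\D}\Bigr] = \Pb(0\in\D) \Bigl( 2\si \hat{\Eb}[T] + \tfrac{2\si^2}{1-\si} \hat{\Eb}[T^2]\Bigr).
\end{equation*}
By Lemma \ref{TL2} this is finite for $d \ge 8$, explaining the precise dimensional threshold in the statement; moreover the bound is deterministic in $\be$, so dominated convergence also passes through the outer $\Qb$-average. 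Once the interchange is legitimate, the computation above yields \eqref{dht}.
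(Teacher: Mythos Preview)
Your formal computation is correct and matches the paper's: the Girsanov representation, the pointwise derivatives of $S(\be_t)$ and $M_T(\be_t)$, and the product rule give exactly the two terms in \eqref{dht}. The Girsanov reversal $\Eb_0[M_T(\be_t)\, T^2 1_{0\in\D}] = \Eb_{\be_t}[T^2 1_{0\in\D}] = \Pb(0\in\D)\,\hat{\Eb}[T^2]$ is also precisely the trick the paper uses, and is the reason $d\ge 8$ enters.

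The gap is in the final ``dominated convergence'' step. Your bound
\[
\bigl|\partial_t\bigl(M_T(\be_t) S(\be_t)\bigr)\bigr| \;\le\; M_T(\be_t)\Bigl(2\si T + \tfrac{2\si^2}{1-\si} T^2\Bigr)
\]
depends on $t$ through $M_T(\be_t)$; it is not a $t$-independent dominating function, so the standard dominated convergence theorem does not apply. What you have actually shown is that the $\Qb\Eb_0$-expectation of this bound is finite and constant in $t$. A uniform $L^1$ bound on the pointwise derivative only yields that $t\mapsto \Qb\Eb_0[M_T(\be_t) S(\be_t) 1_{0\in\D}]$ is Lipschitz, hence differentiable almost everywhere; it does not give differentiability at every $t$ with derivative equal to $\Qb\Eb_0[U_T(\be_t)1_{0\in\D}]$.

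The paper closes this gap as follows. With $U_T(\be_x) := \partial_x\bigl(N_T(x)M_T(\be_x)\bigr)$ one has pointwise $N_T(t)M_T(\be_t) = N_T(0)M_T(\be_0) + \int_0^t U_T(\be_x)\,dx$, and your estimate justifies Fubini, giving
\[
\Qb\Eb_0\bigl[N_T(t)M_T(\be_t)1_{0\in\D}\bigr] = \Qb\Eb_0\bigl[N_T(0)M_T(\be_0)1_{0\in\D}\bigr] + \int_0^t \Qb\Eb_0\bigl[U_T(\be_x)1_{0\in\D}\bigr]\,dx.
\]
It then remains to show that $x\mapsto \Qb\Eb_0[U_T(\be_x)1_{0\in\D}]$ is \emph{continuous}, after which the fundamental theorem of calculus gives differentiability everywhere with the stated derivative. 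The paper obtains this continuity from Lemma~\ref{bd1}: the family $\{T^2 M_T(\be_x) 1_{0\in\D}\}_{x\in[0,1]}$ is nonnegative, a.s.\ continuous in $x$, and has constant expectation (exactly your computation), hence it is uniformly integrable; since $|U_T(\be_x)|1_{0\in\D}$ is dominated by a constant times this family, it too is uniformly integrable and its expectation is continuous in $x$. This uniform-integrability/continuity step is what is missing from your argument.
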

\begin{proof}
We have $M_T(\be_t)=\prod_{j=0}^{T-1} \left[1+\E_j\be_t(Y_j) 1_{Y_j\notin^m}\right]$ then
\begin{align*}
\frac{\partial}{\partial t}M_T(\be_t)&=\left(\sum_{j=0}^{T-1}\frac{(\be_2-\be_1)(Y_j)\E_j}{1+\be_t(Y_j)\E_j}1_{Y_j\notin^m}\right)M_T(\be_t)\\
&=\left(\sum_{j=0}^{T-1}\frac{(\be_2-\be_1)(Y_j)\E_j}{1+\be_t(Y_j)\E_j}1_{Y_j\notin^m}1_{Z_j=Z_{j+1}}\right)M_T(\be_t)
\end{align*}
the last equality is followed by the fact that $Z_j=Z_{j+1}$ when $\E_j\neq 0.$
Set 
\begin{align*}
N_T(t):=\sum_{j=0}^{T-1} \be_t(Y_j) \, 1_{0\in\D} \, 1_{Y_j\notin^{m}} \, 1_{Z_j=Z_{j+1}}.
\end{align*}
Then
\begin{align*}
\frac{\partial}{\partial t}N_T(t)=\sum_{j=0}^{T-1} (\be_2-\be_1)(Y_j) \, 1_{0\in\D} \, 1_{Y_j\notin^{m}} \, 1_{Z_j=Z_{j+1}}.
\end{align*}
and
\begin{align}\label{btU}
\quad\quad\quad U_T(\be_t):&=\frac{\partial}{\partial t}\left[N_T(t)M_T(\be_t)\right]=\frac{\partial}{\partial t}N_T(t)M_T(\be_t)+N_T(t)\frac{\partial}{\partial t}M_T(\be_t)\notag\\
&=\sum_{j=0}^{T-1} (\be_2-\be_1)(Y_j) \, 1_{0\in\D} \, 1_{Y_j\notin^{m}} \, 1_{Z_j=Z_{j+1}}M_T(\be_t)\notag\\
&\quad\quad\quad+N_T(t)\left(\sum_{j=0}^{T-1}\frac{(\be_2-\be_1)(Y_j)\E_j}{1+\be_t(Y_j)\E_j}1_{Y_j\notin^m}1_{Z_j=Z_{j+1}}\right)M_T(\be_t).
\end{align}

We have 
\begin{equation}\Qb\Eb_{0}\left[N_T(t)M_T(\be_t)\right]=\Qb\Eb_{0}[N_T(0)M_T(\be_0)]+\Qb\Eb_{0}\left[\int_{0}^{t}U_T(\be_x)dx\right].
\end{equation}
Since $N_T(t)\leq T1_{0\in\D}, \frac{\partial}{\partial t}N_T(t)\leq 2T1_{0\in\D} $ and $\left|\frac{\E_j}{1+x\E_j}\right|\leq\frac{1}{1-\si},\forall x\leq \si,$ we get
\begin{align*}
&\int_{0}^{t}\Qb\Eb_{0} |U_T(\be_x)|dx\leq 2\int_{0}^{t}\Qb\Eb_{0}(T1_{0\in\D}M_T(\be_x))dx+\frac{2\si}{1-\si}\int_{0}^{t}\Qb\Eb_{0}(T^21_{0\in\D}M_T(\be_x))dx\\
&=2\int_{0}^{t}\hat{\Eb}_0(T)dx+\frac{2\si}{1-\si}\int_{0}^{t}\Qb\Eb_{\be_x}(T^21_{0\in\D})dx=2\int_{0}^{t}\hat{\Eb}_0(T)dx+\frac{2\si}{1-\si}\int_{0}^{t}\Qb\Eb_{0}(T^21_{0\in\D})dx,\\
&(\text{ since }T\text{ and }\{0\in\D\}\text{ belong to }\si (Z), \text{ then they do not depend on }x )\\
&=2t\hat{\Eb}_0 T+\frac{2t\si}{1-\si} \hat{\Eb}_0(T^2)<+\infty \text{ when }\hat{\Eb}_0(T^2)<+\infty.
\end{align*}
It follows from Lemma \ref{TL2} that 
$\hat{\Eb}_0(T^2) < + \infty$ for $d\geq 8$. 
Fubini's theorem leads then to 
\begin{equation}\Qb\Eb_{0}\left[N_T(t)M_T(\be_t)\right]=\Qb\Eb_{0}[N_T(0)M_T(\be_0)]+\int_{0}^{t}\Qb\Eb_{0}\left[U_T(\be_x)\right]dx.
\end{equation}
Now, we prove that $\Qb\Eb_{0}\left[U_T(\be_x)\right]$ is continuous in $x\in[0,1]$. To this end, we recall a general
result about uniform integrability of positive random variables (see for instance  
Theorem 5 page 189 in Shiryaev \cite{Shi96}).

\begin{lem}\label{bd1}
Let $J$ be an interval of $\Rb$, and $(X(\be), \be\in J) $ be a family of positive integrable random variables. 
Assume that $\{X(\be)\}_{\be\in J}$ is a.s. continuous in $\be$.  Then,
 the function $\phi(\be)=\Eb[X(\be)]$ is  continuous in $\be$ if only if  the family 
 $\{X(\be)\}_{\be\in J}$ is uniformly integrable.
\end{lem}

Observe from \eqref{btU} that
\begin{equation}
\label{VUI}
|U_T(\be_x)|\leq 2\si T  M_T(\be_x)\, 1_{0\in\D}+ \frac{2\si}{1-\si} T^2  M_T(\be_x)\, 1_{0\in\D}\,\leq \frac{4}{1-\si} T^2  M_T(\be_x)\, 1_{0\in\D} . 
\end{equation}
 For $x_0\in[0,1]$,  we have:
 \begin{enumerate}
\item $\lim_{x\to x_0} T^2M_T(\be_x) 1_{0\in\D}= T^2 M_T(\be_{x_0}) 1_{0\in\D}$ a.s., 
 \item $T^2 M_T(\be_x) 1_{0\in\D} \geq 0$,
 \item  $\forall x$, $\Qb\Eb_0[T^2 \, M_T(\be_x) \, 1_{0\in\D}]=\Qb\Eb_{\be_x}(T^2 \, 1_{0\in\D})=\Eb_0(T^2 \, 1_{0\in\D})
 < + \infty$, since $\hat{\Eb}_0(T^2) < + \infty$ for $d\geq 8$.
 \end{enumerate}
It follows then from Lemma \ref{bd1} that the family $\{T^2M_T(\be_x) 1_{0\in\D}\}_{x \in [0,1]}$ is uniformly
integrable. By (\ref{VUI}), this is also true for the family $\{U_T(\be_x)\}_{x\to x_0}$ in a neighborhood of 
$x_0 \in [0,1]$.
Therefore, we obtain, 
$$\lim_{x\to x_0}\Qb\Eb_0(U_T(\be_x))=\Qb\Eb_0(U_T(\be_{x_0})) \mbox{ i.e. }\Qb\Eb_0(U_T(\be_x)) \mbox{ is continuous}.$$
Then, we get
$$\frac{\partial}{\partial t}\Qb\Eb_{0}\left[N_T(t)M_T(\be_t)\right]=\Qb\Eb_{0}\left[U_T(\be_t)\right].$$  
This finishes the proof of Lemma \ref{Dhspeed}.
\end{proof}

\subsubsection{Monotonicity of the speed}
\label{Mono} 
We remind the reader that $\tilde{Z}$ is defined as the  walk
$Z$ when it moves, and $\tilde{\D}$ denotes the cut times of $\tilde{Z}$. Since $T \ge 1$, the first term  is bounded  from below by its first item corresponding to $j=0$. 

{\allowdisplaybreaks
\begin{align}
\label{term1}
 \Qb \Eb_{\be_t}\left[\sum_{j=0}^{T-1} (\be_2-\be_1)(Y_j) \, 1_{0\in\D} \, 1_{Y_j\notin^{m}} \, 1_{Z_j=Z_{j+1}}\right]
& \geq \Qb \left[(\be_2-\be_1)(0)\right] P(0\in\D, Z_0=Z_1) \notag
\\
& = \frac{1}{d}  \Qb \left[(\be_2-\be_1)(0)\right] P(0\in\D).
\end{align}
}
The equality \eqref{term1} follows, since $\D:=\{n\in \Zb \mbox{ such that }Z_{(-\infty,n)}\cap Z_{[n,+\infty)}=\emptyset\}$ and, therefor, $\{0\in\D\}=\{Z_{-1}\neq Z_0, 0\in \tilde{\D}\}=\{\et_{-1}=0, 0\in \tilde{\D}\}.$ So we have $P(0\in\D, Z_0=Z_1)=P(0\in\tilde{\D}, \et_0=1, \et_{-1}=0)=P(0\in\tilde{\D}, \et_{-1}=0).P(\et_0=1)=\frac{1}{d}P(0\in\D).$

Now, we focus on the second term. Since $\Eb_{\be_t}[\E_k1_{Y_k\notin^m}/(1+\be_t(Y_k)\E_k)|\G_k]=0$, then
{\allowdisplaybreaks
\begin{align}
& \Qb\Eb_{\be_t}\left[\sum_{0\leq j \leq i\leq T-1}
\be_t(Y_j) \, 1_{0\in\D} \, 1_{Y_j\notin^{m}} 1_{Z_j=Z_{j+1}} 
\frac{(\be_2-\be_1)(Y_i) \E_i }{1+\be_t(Y_i)\E_i}1_{Y_i\notin^m} 1_{Z_i=Z_{i+1}} \right]
=\notag\\& \Qb\Eb_{\be_t}\left[\sum_{0\leq j \leq i\leq T-1}
\be_t(Y_j) \, 1_{0\in\D} \, 1_{Y_j\notin^{m}} 1_{Z_j=Z_{j+1}} 
(\be_2-\be_1)(Y_i)1_{Z_i=Z_{i+1}}\Eb_{\be_t}\left(\frac{\E_i1_{Y_i\notin^m} }{1+\be_t(Y_i)\E_i} |\G_i\right)\right]\notag
\\
&=0.
\end{align}
Then the second term of \eqref{dht} is equal to:
{\allowdisplaybreaks
\begin{align*}
& \Qb\Eb_{\be_t}\left[\sum_{0\leq i < j\leq T-1}
\be_t(Y_j) \, 1_{0\in\D} \, 1_{Y_j\notin^{m}} 1_{Z_j=Z_{j+1}} 
\frac{(\be_2-\be_1)(Y_i) \E_i }{1+\be_t(Y_i)\E_i}1_{Y_i\notin^m} 1_{Z_i=Z_{i+1}} \right]
\\
&\geq - \si\Qb\Eb_{\be_t}\left[\sum_{0\leq i < j\leq T-1}
\, 1_{0\in\D} \,  1_{Z_j=Z_{j+1}} 
(\be_2-\be_1)(Y_i)\Eb_{\be_t}\left(\frac{|\E_i| }{1+\be_t(Y_i)\E_i}|{\G_i}\right)1_{Y_i\notin^m} 1_{Z_i=Z_{i+1}} \right]
\\
& (\mbox{ since } |\be_t| \leq \si )\,
\\
&\geq - \si\Qb\Eb_{\be_t}\left[\sum_{0\leq i < j\leq T-1}
\, 1_{0\in\D} \,  1_{Z_j=Z_{j+1}} 
(\be_2-\be_1)(Y_i)1_{Z_i=Z_{i+1}}1_{Y_i\notin^m}1_{Z_i=Z_{i+1}} \right]
\\
&\geq - \si \Qb \Eb_{\be_t}\left[\sum_{0\leq i < j}
(\be_2-\be_1)(Y_i) 1_{Z_j=Z_{j+1}}1_{Z_i=Z_{i+1}} 1_{0\in\D} 1_{T>j}  \right] \, ,
\\
&
\geq - \si \Qb \Eb_{\be_t}\left[\sum_{0\leq i < j}
(\be_2-\be_1)(Y_i) 1_{\eta_j=1}1_{\eta_i=1} 1_{0\in\tilde{D}} 1_{\tilde{T}>\sum_{k=0}^{j-1}(1-\eta_k)}  \right]\\
&
\geq - \si \Qb \Eb_{\be_t}\left[\sum_{0\leq i < j}
(\be_2-\be_1)(Y_i) 1_{\eta_j=1}1_{\eta_i=1} 1_{0\in\tilde{D}} 1_{\tilde{T}>\sum_{k=0,k\neq i}^{j-1}(1-\eta_k)}  \right]\\
&
=  - \frac{\si}{d^2} \Qb \Eb_{\be_t}\left[\sum_{0\leq i < j}(\be_2-\be_1)(Y_i) 1_{0\in\D} 1_{\tilde{T}>\sum_{k=0,k\neq i}^{j-1}(1-\eta_k)}\right]\\
&\mbox{ since  } \eta_j\mbox{ is independent of }  \tilde{Z}, \F^Y_i, \eta_1,...,\eta_{j-1} \text{ and }\et_i\text{ is independent of }\tilde{Z}, \F^Y_i,\{\et_k\}_{k\neq i}\, , 
\\
&
=  - \frac{\si}{d^2} \Qb \Eb_{\be_t}\left[\sum_{0\leq i < j}(\be_2-\be_1)(Y_i) 1_{0\in\D} 1_{\tilde{T}>\sum_{k=0}^{j-2}(1-\eta_k)}\right]\\
&\text{with the convention that the sum over an empty set equals to }0\\
&
=  - \frac{\si}{d^2} \Qb \Eb_{\be_t}\left[\sum_{0\leq i < j}(\be_2-\be_1)(Y_i) 1_{0\in\D} 1_{T>j-1}\right]\label{EQ},\tag{EQ}\\
&= - \frac{\si}{d^2} \sum_{i=1}^{+\infty}\Qb \Eb_{\be_t}\left[(\be_2-\be_1)(Y_i) (T-i)1_{T>i}
1_{0\in\D}\right] \, , 
\\
&=  - \frac{\si}{d^2}  \sum_{i=1}^{+\infty} \Qb \Eb_{\be_t}
\left[\sum_{z\in\Zb^{d-1}} \sum_{x\in\Zb}
\frac{(\be_2-\be_1)(y)}{d^{2}}1_{Z_i=z} 1_{X_i=x} (T-i)1_{T>i}1_{0\in\D}\right] 
\mbox{ with } y=(x,z) \, , 
\\
&=  - \frac{\si}{d^2} \sum_{i\geq 1} \Qb \left[(\be_2-\be_1)(0)
\sum_{z\in\Zb^{d-1}} \sum_{x\in\Zb}
\Eb_{\th_y\be_t}(1_{Y_i=y} (T-i)1_{T>i}1_{0\in\D})\right]\\
&\text{ because } \be\text{ is stationary,}
\\
&\geq  - \frac{\si}{d^2} 
\sum_{i\geq 1} \Qb \left\{(\be_2-\be_1)(0) \sum_{z\in\Zb^{d-1}}
\Eb_{\th_y\be_t}[(2i+1)(1_{Z_i=z} (T-i)1_{T>i}1_{0\in\D})]\right\} 
\\
& \hspace{5cm} \text{ for } X_i=x \Rightarrow |x|\leq i \, ,
 \\
&\geq - \frac{\si}{d^2}  \sum_{i\geq 1}\Qb\left\{(\be_2-\be_1)(0)
\sum_{z\in\Zb^{d-1}}\Eb_{0}[(2T+1) 1_{Z_i=z} (T-i)1_{T>i}1_{0\in\D})]\right\} \, ,
\\
& \geq - \frac{\si}{d^2}  \Qb\left[(\be_2-\be_1)(0) \right]
\Eb_0\left[\frac{(2T+1)T(T+1)}{2}1_{0\in\D}\right] \, .
\end{align*}
Therefore, we get
$$ \hat{E}(T) \frac{\partial}{\partial t}f(t)\geq \frac{1}{d} 
\Qb[(\be_2-\be_1)(0)] \left[1-\frac{1}{d}\si\hat{E}\left[\frac{(2T+1)T(T+1)}{2}\right]\right].$$
This implies that $\frac{\partial}{\partial t}f(t)\geq 0$ when $d\geq\si\hat{E}\left[\frac{(2T+1)T(T+1)}{2}\right]$. Lemma \ref{TL2}} asserts that

\begin{align}\label{T3}
d_0:=\max\left\{\left\lfloor\sup_{d\geq 10}\hat{E}\left[\frac{(2T+1)T(T+1)}{2}\right]\right\rfloor+1,10\right\}<+\infty.
\end{align} 
Then, for $d\geq d_0>\si d_0$, we have $({\partial}/{\partial t})f(t)\geq 0$, which implies that $f(0)\leq f(1)$ 
 so that $v(\Qb_{\be_1})\leq v(\Qb_{\be_2}) $ on the set of probability measures on bounded
 environment. 
Choose $\si_0=\frac{10}{d_0}$, then we have the monotonicity 
for environments bounded by $\si_0$ for any $d\geq 10.$ For $d\geq d_0$, we have proved the monotonicity on the set of environments bounded by $\si<1$, take $\si$ tending to $1$, this finishes the proof. 

\section{Proof of Theorem \ref{ThERW}.}\label{Sec3}
The proof of Theorem \ref{ThERW} is based on that of Theorem \ref{Therwrc}.
\subsection{The differentiability of the speed $v(\be)$}
In the proof of Theorem \ref{Therwrc}, Section \ref{ft} about the differentiability of $f(t)$ for $d\geq 8$, we consider $m=1,\be_1(y)=0, \be_2(y)=\be_2, \be_t=t\be_2, t\in[0,1]$ for all $y\in\Zb^d$ and $\be_2$ is constant in $(0,1).$ The function $f(t)$ is difined by the couple of the environments $\be_1,\be_2$ so we denote $f_c(t)$ to be the function defined by $\be_1=0,\be_2=c$ for some constant $c\in[0,1).$ Then we have $v(\be)=f_{\be_2}(\frac{\be}{\be_2})$, moreover $f(t)$ is differentiable in $t\in [0,1]$, this implies that $v(\be)$ is differentiable in $\be\in [0,\be_2)$ for all $\be_2<1$ i.e. it is differentiable in [0,1) when $d\geq 8.$

We are now interested  in proving the existence and computing 
the derivative at the critical point $0$.
By Lemma $\ref{20}$, with $N_n:=d\sum_{j=0}^{n-1}1_{Y_j\notin}1_{Z_j=Z_{j+1}}$, we get 
$$\frac{v(\be)}{\be}=\frac{1}{d} \frac{\Eb_0(N_T1_{0\in\D}M_T(\be))}
{\Eb_0(T1_{0\in\D})}=\frac{1}{d} \Eb_0(N_T1_{0\in\D}M_T(\be)). $$
Note that 
\begin{itemize}
\item $T1_{0\in\D}M_T(\be)\geq 0$,
\item $\lim_{\be\to 0}(T1_{0\in\D}M_T(\be))=T1_{0\in\D}$,
\item $\Eb_0(T1_{0\in\D}M_T(\be))=\Eb_{\be}(T1_{0\in\D})=\Eb_0(T1_{0\in\D})
=1$ for $d \geq 6$.
\end{itemize}
Therefore, by Lemma \ref{bd1},  $\{T1_{0\in\D}M_T(\be)\}_{\be}$ is uniformly integrable in a neighborhood of $0$.
 This is also true for $\{N_T1_{0\in\D}M_T(\be)\}_{\be\to 0}$ since $N_T \leq dT$. Therefore, we get $$\lim_{\be\to 0}\Eb_0(N_T1_{0\in\D}M_T(\be))=\Eb_0(N_T1_{0\in\D}).$$
On the other hand, with $R_n$ is the range of the simple symmetric random walk on $\Zb^d$ and denote $\{Y_i\notin\}:=\{Y_i\notin\{Y_0,Y_1,...,Y_{i-1}\}\}$ then 
\begin{align*}
R(0):&=\lim_{n\to\infty}\frac{R_n}{n}=\lim_{n\to\infty}\frac{R_{T_n}}{T_n}=\lim_{n\to\infty}\frac{R_{T_1}+(R_{T_2}-R_{T_1})+...+(R_{T_n}-R_{T_{n-1}})}{T_n}\\
&=\lim_{n\to\infty}\frac{(1_{Y_0\notin}+...+1_{Y_{T_1-1}\notin})+(1_{Y_{T_1}\notin}+...+1_{Y_{T_2-1}\notin})+...+(1_{Y_{T_{n-1}}\notin}+...+1_{Y_{T_n-1}\notin})}{T_n}\\
&=\frac{\Eb_0(R_T1_{0\in\D})}{\Eb_0(T1_{0\in\D})}=\Eb_0(R_T1_{0\in\D}), (\text{because }\Eb_0(T1_{0\in\D})=\hat{\Eb}_0(T)\Pb_0(0\in\D)=1).
\end{align*}
Similarly, with $N_n=d\sum_{j=0}^{n-1}1_{Y_j\notin}1_{Z_j=Z_{j+1}}$ then 
$$
\lim_{n\to\infty}\frac{N_n}{n}=\frac{\Eb_0(N_T1_{0\in\D})}{\Eb_0(T1_{0\in\D})}=\Eb_0(N_T1_{0\in\D}).$$
Note that
$$
\Eb_0(N_n)=d\sum_{j=0}^{n-1}\Eb_0(1_{Y_j\notin}1_{Z_j=Z_{j+1}})=d\sum_{j=0}^{n-1}\Eb_0(1_{Y_j\notin})\Pb_0({Z_j=Z_{j+1}})=\Eb_0\left(\sum_{j=0}^{n-1}1_{Y_j\notin}\right)=\Eb_0(R_n).
$$ Therefore
\begin{align*}
R(0):=\lim_{n\to\infty}\frac{R_n}{n}=\lim_{n\to\infty}\Eb_0\left(\frac{R_n}{n}\right)=\lim_{n\to\infty}\Eb_0\left(\frac{N_n}{n}\right)=\Eb_0(N_T1_{0\in\D}).
\end{align*}
\subsection{Monotonicity of $v(\be)$}
In Section \ref{Mono}, we consider the particular case $m=1$ and $\be_1(y)=\be_1, \be_2(y)=\be_2$ for all $y\in\Zb^d,$ where $\be_1$ and $\be_2$ are two constants in $[0,1)$ such that $\be_1\leq \be_2\leq\si< 1$.
By \eqref{term1} and \eqref{EQ} we get that
\begin{align}\label{T2}
\hat{E}(T) \frac{\partial}{\partial t}f(t)&\geq \frac{1}{d}(\be_2-\be_1)\left[P(0\in\D)-\frac{\si}{d} \Qb \Eb_{\be_t}\left(\sum_{j\geq 1}j1_{0\in\D} 1_{T>j-1}\right)\right]\notag\\
&\geq \frac{1}{d}(\be_2-\be_1)\left[P(0\in\D)-\frac{\si}{d}E\left(\sum_{j\geq 1}j1_{0\in\D} 1_{T>j-1}\right)\right]\notag\\
&\geq \frac{1}{d}(\be_2-\be_1)\left[1-\frac{\si}{d}\hat{E}\left(\frac{T^2+T}{2}\right)\right].
\end{align}
Set $d_0:=\max\left\{\left\lfloor \sup_{d\geq 8}\hat{E}\left(\frac{T^2+T}{2}\right)\right \rfloor+1,8\right\}$ then
$\frac{\partial}{\partial t}f(t)\geq 0$ i.e. f(t) is increasing in $t\in[0,1]$ and $v(\be)$ is increasing in $\be\in[0,1]$ when $d\geq d_0$ or $\si\leq \frac{8}{d_0}$ for all $d\geq 8.$

\section{Proof of Theorem \ref{md}}\label{Sec4}
For $m-$ERW, we denote the function $f(t)$ by $f_c(m,t)$ in the case of the couple environments such that $\be_1=0,\be_2=c$ where $c$ is a constant in $[0,1)$ and $\be_t=tc, t\in [0,1].$

Set $$N_n^{m}=d\sum_{j=0}^{n-1}1_{Y_j\notin^{m}}1_{Z_j=Z_{j+1}} \, .$$
Then, from the formula \eqref{NumV}  we get
$$ \Eb_{m,\be}(X_T \, 1_{0\in\D})=\frac{\be}{d}\Eb_{m,\be}(N^{m}_T \, 1_{0\in\D})
.$$
$m-$ERW is the particular case of $m-$ERW with i.i.d. random cookies, then the law of large numbers gives the following formula of the speed when $d\geq 6$:
\begin{align}
v(m,\be)&=\frac{\Eb_{m,\be}(X_T \, 1_{0\in\D})}{\Eb_{m,\be}(T \,1_{0\in\D})}
=\frac{\be}{d}\frac{\Eb_{0}(N_T^{m}\,1_{0\in\D})}{\Eb_{0}(T\,1_{0\in\D})}.
\end{align}
We see that $v(m,\be)=f_c(m,\frac{\be}{c})$ (where $t=\frac{\be}{c}$), then 
$$
\frac{\partial v}{\partial\be}(m,\be)=\frac{\partial f_c}{\partial t}(m,\frac{\be}{c}). \frac{1}{c}\, ,
$$
and combine with the formula \eqref{dht} we obtain the derivative of the speed:
\begin{align}
\frac{\partial v}{\partial\be}(m,\be)&=\frac{1}{d}\frac{\Eb_{0}[N_T^{m}\, M_T^{m}(\be)\,1_{0\in\D}]}{\Eb_{0}(T\,1_{0\in\D})}
+\frac{\be}{d}\frac{\Eb_{0}[N_T^{m}\, M_T^{m}(\be) \, U_T^{m}(\be)\, 1_{0\in\D}]}{\Eb_{0}(T\,1_{0\in\D})}, \text{ for } \be\in [0,1)
\label{21}
\end{align}
where 
$$ U_T^{m}(\be)=\sum_{j=0}^{T-1}\frac{\E_j}{1+
\be\E_j}1_{Y_j\notin^{m}\{Y_0,...Y_{j-1}\}}1_{Z_j=Z_{j+1}}
\, ,
 $$
 $$M_T^{m}(\be)=\prod_{j=0}^{T-1}\left[1+\ep_j\be 1_{Y_j\notin^{m}\{Y_0,...Y_{j-1}\}}\right]
 \, .
 $$
 
 In order to prove the uniform convergence of $({\partial v}/{\partial\be})(m,\be)$ as $m$ goes to $+ \infty$, we use
 the following lemma, whose proof is given below:

 \begin{lem}\label{bd2}
Let $J$ be an interval of $\Rb$, and $\{X_n(\be)\}_{\be \in J, n \geq 1}$, $\{X(\be)\}_{\be\in J}$
be families of non-negative random variables. Assume that 
\begin{enumerate}
\item for every $n$, $\{ X_n(\be)\}_{\be\in J}$ is uniformly integrable,
\item $\{X(\be)\}_{\be\in J}$ is uniformly integrable,
\item $X_n(\be)$ converges in probability to $X(\be)$, uniformly in $\be$: for any $\ep >0$, 
$$ \lim_{n \to +\infty} \sup_{\be \in J} \Pb ( \left| X_n(\be) - X(\be) \right| > \ep) =0 \, . 
$$ 
\end{enumerate}
Then, $\lim_{n \rightarrow + \infty}  \sup_{\be \in J} \left| \Eb(X_n(\be))  -\Eb(X(\be)) \right| = 0$
 if and only if $\{X_n(\be)\}_{n\in\Nb,\be\in J}$ is uniformly integrable.
\end{lem}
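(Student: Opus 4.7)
The statement is a parametric version of the Vitali convergence theorem, and the plan is to split into the two implications. The workhorse in both directions will be the equivalent characterization of uniform integrability: a family $\{Y_\al\}_{\al \in A}$ of non-negative integrable random variables is UI if and only if $\lim_{K\to\infty} \sup_{\al\in A}\Eb[(Y_\al-K)^+]=0$, equivalently if and only if it is $L^1$-bounded and uniformly absolutely continuous, meaning that for every $\eta>0$ there exists $\de>0$ such that $\Pb(B)<\de$ implies $\sup_{\al\in A}\Eb[Y_\al \one_B]<\eta$. Since $(Y-K)^+$ is $1$-Lipschitz in $Y$, it behaves well under convergence in probability, which is what makes the argument go through with uniform control in $\be$.

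For the direction ($\La$), I would assume $\{X_n(\be)\}_{n,\be}$ is jointly UI and bound
$$
|\Eb[X_n(\be)]-\Eb[X(\be)]|\leq \Eb|X_n(\be)-X(\be)|\leq \ep+\Eb[X_n(\be)\one_{A_{n,\be,\ep}}]+\Eb[X(\be)\one_{A_{n,\be,\ep}}],
$$
where $A_{n,\be,\ep}=\{|X_n(\be)-X(\be)|>\ep\}$. Given $\eta>0$, the equicontinuity characterization of UI gives $\de>0$ such that $\Pb(B)<\de$ forces both $\sup_{n,\be}\Eb[X_n(\be)\one_B]<\eta$ (by joint UI of $X_n$) and $\sup_\be\Eb[X(\be)\one_B]<\eta$ (by hypothesis 2). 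Assumption 3 makes $\sup_\be\Pb(A_{n,\be,\ep})<\de$ for $n$ large, so $\sup_\be\Eb|X_n(\be)-X(\be)|$ tends to $0$, which is stronger than what is claimed.

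For the direction ($\Ra$), I would prove joint UI by showing $\lim_{K\to\infty}\sup_{n,\be}\Eb[(X_n(\be)-K)^+]=0$. For fixed $K$, since $X\wedge K$ and $X_n\wedge K$ are bounded by $K$, the standard $\ep/2K$-splitting on $A_{n,\be,\ep}$ together with assumption 3 yields $\sup_\be|\Eb[X_n(\be)\wedge K]-\Eb[X(\be)\wedge K]|\to 0$ as $n\to\infty$. Combined with the assumed uniform convergence $\sup_\be|\Eb[X_n(\be)]-\Eb[X(\be)]|\to 0$ and the identity $(X-K)^+=X-X\wedge K$ (valid for $X\ge 0$), this gives
$$
\sup_{\be\in J}\bigl|\Eb[(X_n(\be)-K)^+]-\Eb[(X(\be)-K)^+]\bigr|\xrightarrow[n\to\infty]{}0\quad\text{for each fixed }K.
$$
Given $\ep>0$, assumption 2 provides $K_0$ with $\sup_\be\Eb[(X(\be)-K_0)^+]<\ep/2$; combined with the previous display this yields $\sup_\be\Eb[(X_n(\be)-K_0)^+]<\ep$ for all $n\geq N(\ep)$. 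For the finitely many $n<N(\ep)$, assumption 1 supplies constants $K_n$ with $\sup_\be\Eb[(X_n(\be)-K_n)^+]<\ep$, and taking $K_\ep=\max(K_0,K_1,\dots,K_{N-1})$ (using monotonicity of $K\mapsto\Eb[(Y-K)^+]$) gives joint UI.

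The main obstacle is the forward direction, specifically bootstrapping the per-$n$ UI of assumption 1 and the UI of the limit $X$ into \emph{joint} UI in the pair $(n,\be)$. The decisive observation is to work with the Lipschitz truncation $(X-K)^+$ rather than $X\one_{X>K}$: Lipschitz-ness transfers uniform convergence in probability to uniform convergence in $L^1$ at each fixed $K$, and then the splitting $n\geq N(\ep)$ versus $n<N(\ep)$ lets us separately consume assumption 2 (for the tail in $n$) and assumption 1 (for the finitely many initial indices).
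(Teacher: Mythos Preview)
Your proof is correct and follows essentially the same plan as the paper's: for ($\La$) split on the event $\{|X_n(\be)-X(\be)|>\ep\}$ and control the tail pieces via uniform integrability, and for ($\Ra$) show that the tail of $X_n(\be)$ above a fixed level is eventually dominated by the tail of $X(\be)$, then handle the finitely many initial indices using assumption~1. The only cosmetic difference is that the paper works throughout with the hard truncation $X\,\ind_{X\ge C}$ and a direct decomposition of $\Eb[X_n\,\ind_{X_n\ge C}]$, whereas you use the $1$-Lipschitz truncation $(X-K)^+=X-X\wedge K$ and the absolute-continuity formulation of UI; both lead to the same ``large $n$ versus finitely many small $n$'' endgame.
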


Set  
$$N_T^{\infty}=d \sum_{j=0}^{T-1} 1_{Z_j=Z_{j+1}} \, , \, \, 
U_T^{\infty}(\be)=\sum_{j=0}^{T-1}\frac{\E_j}{1+\be\E_j}1_{Z_j=Z_{j+1}} \, , \, \, 
\,M_T^{\infty}(\be)=\prod_{j=0}^{T-1}\left(1+\ep_j\be\right)\, . $$
One can check that the following inequalities hold: 
$ \forall m \in \Nb \cup \{+ \infty \}$ , $\forall \beta \in [0, \be_0)$ ($\be_0 < 1$),
$$ N^m_T \le dT \, , \,\, M^m_T(\beta)  \leq 2^T  \, , \,\,  
V^m_T(\beta)  \leq \frac{T}{1-\be_0} \, ,
$$
$$ \left| N^m_T - N^{\infty}_T \right| \leq d (T-m)_+ \, ,
$$
$$
\sup_{\be \in [0,1]} \left| M^m_T(\be) - M^{\infty}_T(\be) \right|  \leq 2^T (T-m)_+ \, ,
$$
$$
\sup_{\be \in [0,\be_0]} \left| V^m_T(\be) - V^{\infty}_T(\be) \right|  \leq \frac{1}{1-\be_0} (T-m)_+ \, .
$$ 
We deduce from these inequalities that 
$\sup_{\be \in [0,1]} \left| N^m_ T M^m_T(\be) - N^{\infty}_T M^{\infty}_T(\be) \right|$
converges a.s. to 0 when $m$ tends to $\infty$. The same is true for 
$$\sup_{\be \in [0,\be_0]} \left| N^m_ T M^m_T(\be) V^m_T(\be)
- N^{\infty}_T M^{\infty}_T(\be) V^{\infty}_T(\be)\right|.$$


Using Lemma \ref{bd1}, we can also show that for every $m \geq 1$ the family 
$\{T M^m_T(\beta) 1_{0\in \D}\}_{\beta \in [0,1]}$
is uniformly integrable w.r.t. index $\be$ for $d \geq 6$. Indeed, it is a.s. continuous in $\beta$ for every $m\geq 1$, and for $d \geq 6$,
$$\Eb_0(T M^m_T(\beta)  \, 1_{0 \in \D}) = \Eb_{m,\be}(T\, 1_{0 \in \D})= \Eb_0(T 1_{0 \in \D}) = 1 .$$
Since $N^m_T \le dT$,  for every $m \geq 1$ the family 
$\{N^m_T M^m_T(\beta) 1_{0\in \D}\}_{\beta \in [0,1]}$
is uniformly integrable for $d \geq 6$.

In the same way, Lemma \ref{bd1} implies that for every $m \geq 1$ the family 
$\{T^2 M^m_T(\beta) $ $1_{0\in \D}\}$ $_{\beta \in [0,1]}$
is uniformly integrable for $d \geq 8$. Since $N^m_T \le T$ and $V^m_T(\be) \le \frac{1}{1-\be_0} T$ for $0 \leq \be \leq \be_0 <1$,
for every $m \geq 1$ the family $\{N^m_T V^m_T(\be)  M^m_T(\beta) 1_{0\in \D}\}_{\beta \in [0,\be_0]}$ is also uniformly integrable.  
To apply Lemma \ref{bd2}, it remains to prove that $\{N^{\infty}_T M^{\infty}_T(\beta) 1_{0\in \D}\}_{\beta \in [0,1]}$, (resp. $\{N^{\infty}_T M^{\infty}_T(\beta) V^{\infty}_T(\beta) 1_{0\in \D}\}_{\beta \in [0,1]}$) are uniformly integrable. This is
true for $d \geq 6$ (resp. $d \geq 8$) using again Lemma \ref{bd1}. 

By Lemma \ref{bd2}, we conclude that for $d \geq 8$, and $0 \leq \be_0 < 1$, 
$$ \lim_{m \rightarrow + \infty} \sup_{\be \in [0, \be_0]} \left| \frac{\partial v}{\partial\be}(m,\be) - \frac{\partial v}{\partial\be}(\infty,\be) \right| = 0 \, .
$$
Note that $\Pb_{\infty,\be}$ is the law of simple random walk with drift $\be$. Therefore,  $v(\infty,\be)=\be/d$ and $({\partial v}/{\partial\be})(\infty,\be) =1/d$, leading to the 
 statement in Theorem \ref{md}. This in turn implies that for $d\geq 8$, for all $\be_0\in[0,1)$ there exists $m(\be_0)$ such that for
  $m\geq m(\be_0)$ the speed of ERW with $m$ cookies is increasing in $\be$ on $[0,\be_0].$
  
To finish the proof of Theorem $\ref{md}$, we prove Lemma $\ref{bd2}$.

\begin{myproof}[Proof of Lemma \ref{bd2}]
$(\Leftarrow)$ We prove the sufficiency.
Since $\{X_n(\be)\}_{n,\be}$ and  $\{X(\be)\}_{\be}$ are  uniformly integrable,  for all $\ep>0,$ there exists $c_0$ 
such that for all  $c\geq c_0$, we have:
$$ \sup_{n,\be}\Eb[X_n(\be)1_{X_n(\be)\geq c}]<\ep  \, , \, \,  \sup_{\be}\Eb[X(\be)1_{X(\be)\geq c}]<\ep \, . $$
Therefore
\begin{align}\label{hoitudeu}
&|\Eb [X_n(\be)]-\Eb [X(\be)]|
\\
&\leq \ep+\Eb[|X_n(\be)|1_{|X_n(\be)-X(\be)|> \ep}]+\Eb[|X(\be)|1_{|X_n(\be)-X(\be)|> \ep}]
\notag\\
&\leq \ep+\Eb[X_n(\be)1_{X_n(\be) \geq c_0}]+\Eb[X_n(\be)1_{X_n(\be)< c_0}1_{|X_n(\be)-X(\be)|> \ep}]
\notag\\
&\quad\,\,\,\,\,+\Eb[X(\be)1_{X(\be) \geq  c_0}]+\Eb[X(\be)1_{X(\be)<c_0}1_{|X_n(\be)-X(\be)|> \ep}]\notag\\
&\leq 3 \ep+2c_0 \sup_{\be} \Pb[ |X_n(\be)-X(\be)|>\ep].
\end{align}
By assumption 3, we get that for all $\ep >0$, 
$$ \limsup_{n \rightarrow + \infty} \sup_{\be} |\Eb [X_n(\be)]-\Eb [X(\be)]| \leq 3 \ep \, .$$

($\Rightarrow$) We prove now the necessity. 
For any $C > 0$, 
\begin{align*}
& \Eb(X_n(\be) \, 1_{X_n(\be) \geq C}) 
\\ 
& = \Eb(X_n(\be) -X(\be)) + \Eb(X(\be ) \, 1_{X(\be) \geq C-1})  
+ \Eb(X(\be) \, 1_{X(\be) < C-1} - X_n(\be) \, 1_{X_n(\be) < C}) \, .
\end{align*}
Using the positivity of $X_n(\be)$, for any $\ep \in (0,1)$,
\begin{align*}
 & X(\be) \, 1_{X(\be) < C-1} - X_n(\be) \, 1_{X_n(\be) < C}
\\ 
&  \leq [X(\be)-X_n(\be)] 1_{X(\be) < C-1,X_n(\be) < C}
+ X(\be) \, 1_{X(\be) < C-1} 1_{| X_n(\be) - X(\be) | \geq \ep}
\\
& \leq \ep + 2 C 1_{| X_n(\be) - X(\be) | \geq \ep} \, . 
\end{align*}
Therefore, for any $C > 0$ and any $\ep \in (0,1)$, 
\begin{align*}
 &\sup_{\be} \Eb[X_n(\be) \, 1_{X_n(\be) \geq C}]
\\
&  \leq \sup_{\be} \left| \Eb[X_n(\be) -X(\be)] \right|
+ \sup_\be \Eb[X(\be ) \, 1_{X(\be) \geq C-1}]   + \ep + 2 C  \sup_{\be} \Pb( | X_n(\be) - X(\be) | \geq \ep) \, .
\end{align*}
Taking the limit $n \to \infty$, then $\ep \to 0$ leads to
\begin{equation}
\label{UIunif}
\limsup_{n \to \infty} \sup_{\be} \Eb[X_n(\be) \, 1_{X_n(\be) \geq C}] \leq \sup_{\be} \Eb[X(\be ) \, 1_{X(\be) \geq C-1}] 
\, .
\end{equation}
Let $\ep > 0$. Using the uniform integrability of the family $\{X(\be )\}_{\be}$, one can find $C_0(\ep)$ such
that $\sup_{\be} \Eb[X(\be ) \, 1_{X(\be) \geq C_0(\ep)-1}] \leq \ep$. By \eqref{UIunif}, there
exists $n_0(\ep)$ such that for all $n \geq n_0(\ep)$, 
$$ \sup_{\be} \Eb[X_n(\be) \, 1_{X_n(\be) \geq C_0(\ep)}] \leq 2 \ep \, .$$
For $n < n_0(\ep)$, we use the uniform integrability of the family $\{X_n(\be )\}_{\be}$ to get $C_1(\ep)$ 
such that  for any $C \geq C_1(\ep)$, $\sup_{n\leq n_0(\ep),\be}\Eb[X_n(\be)1_{X_n(\be)>C}]<\ep.$
 Now, choosing $C_2(\ep)=\max\{C_0(\ep),C_1(\ep)\}$,
we get  $\sup_{n,\be}\Eb[X_n(\be)1_{X_n(\be)>C}]<2\ep$ for all $C>C_2(\ep)$ .

\end{myproof}

\section*{Acknowledgments} I would like to thank my Ph.D. advisors  Pierre Mathieu, and Fabienne Castell for suggesting this problem. This research was supported by the French ANR project MEMEMO2 2010 BLAN 0125.
\newpage
\bibliographystyle{plain}
\bibliography{dl}
\thispagestyle{empty}
\end{document}